\documentclass[framed]{customarticle}

\title{Stochastic Nonconvex Bilevel Optimization: \\ Improved Rates Without Rare-Visit Assumption}
\author{
Daniel Cortild$^{1}$ \orcidCortild, Mathias Staudigl$^2$ \orcidStaudigl, Juan Peypouquet$^{3}$ \orcidPeypouquet, Coralia Cartis$^1$ \orcidCartis \\ \\
$^{1}$Mathematical Institute, University of Oxford, United Kingdom \\
$^{2}$Department of Business Analytics and Decision Sciences, \\ University of Economics and Business, Vienna, Austria \\
$^{3}$Bernouilli Institute, University of Groningen, The Netherlands}
\date{Last Compiled: \today}

\usepackage{lipsum}
\usepackage{threeparttable}

\newcommand{\nablafk}{\nabla f_k}
\newcommand{\nablagk}{\nabla g_k}

\begin{document}

\maketitle

\begin{abstract}
\noindent We investigate stochastic simple bilevel optimization with smooth and possibly nonconvex upper- and lower-level objectives. Existing stochastic extensions of dynamic barrier gradient descent (DBGD) either obtain fast convergence under an unverifiable trajectory-dependent ``rare-visit'' assumption, or remove this assumption at a substantially higher oracle cost. We show that a simple denominator-only regularization of the DBGD multiplier eliminates the need for such an assumption while preserving fast convergence rates. Specifically, our method achieves $(\varepsilon, \varepsilon)$-stationarity in $\mathcal O(\varepsilon^{-2})$ iterations using $\mathcal O(\varepsilon^{-4})$ upper-level and $\mathcal O(\varepsilon^{-7})$ lower-level stochastic gradients, which improves upon the best assumption-free complexities. We additionally derive anytime parameter schedules. \\ \\ 
\noindent \textbf{Keywords.} Simple Bilevel Programming, Stochastic Optimization, Dynamic Barrier.
\end{abstract}

\section{Introduction}

In this paper, we consider a \textit{simple stochastic nonconvex bilevel program}, given by
\begin{equation}\label{eq:SSNBP}\tag{P}
    \min_{x\in \H} \left\{f(x) \colon \quad x\in \argmin_{y\in \H} g(y)\right\},
\end{equation}
where $f=\Exp{f_\xi}, g=\Exp{g_\zeta}\colon \H\to \R$ are defined over a real Hilbert space $\H$ and are smooth and not necessarily convex, and only accessible through a stochastic oracle call. Here, the term \textit{simple} refers to the fact that $f$ does not depend on the lower-level variable $y$ and $g$ does not depend on the upper-level variable $x$. In this context, the lower-level function $g$ serves as a primary requirement, whereas the upper-level objective $f$ selects amongst lower-level solutions. This problem is also referred to as a \textit{nonconvex hierarchical program}. Such a formulation has many applications, such as in hyperparameter training \citep{franceschi_bilevel_2018} or in machine unlearning \citep{reisizadeh_blur_2025}, to name a few.

Even in a deterministic setting, nonconvex bilevel programs are hard. The lower-level constraint is global, and may have a nonconvex or even disconnected solution set. Therefore, classical ideas from convex bilevel programming, such as Tikhonov regularizations \cite{kaushik_method_2021,samadi_improved_2025,alves_inertial_2025,cortild_regularization_2025} or forming outer approxiations of the lower-level solution set \cite{jiang_conditional_2023,cao_projectionfree_2023,cao_accelerated_2024} do not translate.

As solving Problem \eqref{eq:SSNBP} to global optimality is intractable, we merely solve it to $(\varepsilon_f, \varepsilon_g)$-stationarity, which was introduced in \cite{cao_complexity_2025} and will be rigorously defined in Section \ref{sec:setup}. It is such that $\nabla g(x)$ is small, so that the point is nearly stationary for the lower-level, and such that there is no evident first-order direction that improves the upper-level objective. 

To achieve this notion of stationarity, Cao et al. \cite{cao_complexity_2025} make use of Dynamic Barrier Gradient Descent (DBGD), originally proposed in \cite{gong_automatic_2021}, which combines upper- and lower-level descent through an adaptive dual multiplier. Extending this approach to the present stochastic setting is nontrivial, as directly replacing the gradient by a stochastic estimator may cause singularities within the multiplier definition, since the lower-level gradient estimator may vanish away from stationary points.

Recently, Ahmadi et al. \cite{ahmadi_stochastic_2026} address this issue by perturbing both the numerator and the denominator of the dual multiplier. The resulting method achieves fast rates under a trajectory-dependent ``rare-visit'' assumption, which is unverifiable \textit{a priori}, and remove this condition through a penality-regularization, which leads to substantially slower rates. We instead perturb only the denominator, giving rise to
\[
    \underbrace{\frac{[\beta(\|\nabla g\|^2+\rho)-\langle \nabla f, \nabla g\rangle]_+}{\|\nabla g\|^2+\rho}}_{\text{Ahmadi et al. \cite{ahmadi_stochastic_2026}}}\quad\text{v.s.}\quad \underbrace{\frac{[\beta\|\nabla g\|^2-\langle \nabla f, \nabla g\rangle]_+}{\|\nabla g\|^2+\rho}}_{\text{Ours}}.
\]
Despite the multipliers differing only by an additive constant in the numerator, this modification allows us to establish faster convergence rates under standard, verifiable assumptions.
We provide a more thorough comparison of these works in Table \ref{tab:comparison}.

\begin{table}[H]
    \centering
    \small
    \begin{threeparttable}
        \caption{Comparison of iteration count, and upper- and lower-level
        (stochastic) oracle calls to reach a $(\varepsilon,\varepsilon)$-stationary
        point, and the additionally required assumptions.}
        \label{tab:comparison}

        \begin{tabular}{|l|cccc|l|}
            \hline
            \textbf{Algorithm}
            & \textbf{Setting}
            & \textbf{Iterations}
            & \textbf{Upper}
            & \textbf{Lower}
            & \textbf{Extra Assumption?}
            \\ \hline

            \textbf{DBGD} \cite{cao_complexity_2025}
            & Det
            & $\mathcal O(\varepsilon^{-2})$
            & $\mathcal O(\varepsilon^{-2})$
            & $\mathcal O(\varepsilon^{-2})$
            & N/A
            \\ \hline

            \textbf{SDBPG} \cite{ahmadi_stochastic_2026}
            & Sto
            & $\mathcal O(\varepsilon^{-2})$
            & $\mathcal O(\varepsilon^{-2/\varsigma})$
            & $\mathcal O(\varepsilon^{-3/\varsigma})$
            & Yes\tnote{1,2}
            \\

            \textbf{PR-SDBPG} \cite{ahmadi_stochastic_2026}
            & Sto
            & $\mathcal O(\varepsilon^{-4})$
            & $\mathcal O(\varepsilon^{-8})$
            & $\mathcal O(\varepsilon^{-10})$
            & Yes, but verifiable\tnote{2}
            \\

            \textbf{VR-PR-SDBPG} \cite{ahmadi_stochastic_2026}
            & Sto
            & $\mathcal O(\varepsilon^{-4})$
            & $\mathcal O(\varepsilon^{-6})$
            & $\mathcal O(\varepsilon^{-8})$
            & Yes, but verifiable\tnote{2,3}
            \\ \hline

            \textbf{SDBGD} (This work)
            & Sto
            & $\mathcal O(\varepsilon^{-2})$
            & $\mathcal O(\varepsilon^{-4})$
            & $\mathcal O(\varepsilon^{-7})$
            & N/A
            \\ \hline
        \end{tabular}

        \begin{tablenotes}[flushleft]
            \footnotesize
            \item[1] ``Rate-Visit'' Assumption $\varsigma\in (0, 1/2]$: unverifiable \textit{a priori} as it involves the trajectory of the algorithm.
            \item[3] Mean-squared smoothness of the individual realization $f_\xi$ and $g_\zeta$.
        \end{tablenotes}
    \end{threeparttable}
\end{table}

\vspace{-1.5em}

\textbf{Contributions.} We propose a new algorithm, SDBGD, to solve Problem \eqref{eq:SSNBP}, converging to an $(\varepsilon, \varepsilon)$-stationary point in $\mathcal O(\varepsilon^{-2})$ iterations, requiring a total of $\mathcal O(\varepsilon^{-4})$ stochastic oracle calls to $f$ and $\mathcal O(\varepsilon^{-7})$ stochastic oracle calls to $g$. Importantly, we do not rely on an unverifiable assumption, such as the ``rare-visit'' assumption. We additionally provide anytime convergence rates, not requiring knowledge of the final horizon. We finally compare our method numerically to the ones proposed by Ahmadi et al. \cite{ahmadi_stochastic_2026} on a simple example.
\section{Problem Setup and Stationarity}\label{sec:setup}

Let $\H$ be a real Hilbert space with associated inner product $\langle \cdot, \cdot\rangle$ and induced norm $\|\cdot\|$. We consider the stochastic simple bilevel problem
\begin{equation}\tag{P}
    \min_{x\in \H} \left\{f(x) \colon \quad x\in \argmin g\right\},
\end{equation}
where, for all $x\in \H$, $f(x)=\ExpD{\xi}{f_\xi(x)}$ and $g(x)=\ExpD{\zeta}{g_\zeta(x)}$, 
with $f_\xi, g_\zeta\colon \H\to \R$ stochastic realizations of $f$ and $g$ respectively. We assume the following.
\begin{assumption}\label{ass:problem}
The expectations defining $f$ and $g$ are well-defined and differentiable, with, for all $x\in \H$, $\nabla f(x)=\ExpD{\xi}{\nabla f_\xi(x)}$ and $\nabla g(x)=\ExpD{\zeta}{\nabla g_\zeta(x)}$. 

We further assume that $\argmin g\neq \emptyset$ and that $\inf f>-\infty$. 
\end{assumption}

We additionally assume the following.

\begin{assumption}\label{ass:main}
    There exist constants $L_f, L_g, G_f>0, \sigma_f^2, \sigma_g^2\ge 0$ such that
    \begin{enumerate}
        \item $f$ and $g$ are respectively $L_f$- and $L_g$-smooth;
        \item the upper-level gradient is uniformly bounded, namely $\|\nabla f(x)\|\le G_f$ for all $x\in \H$;
        \item the stochastic gradients have uniformly bounded variance, namely 
        \[
            \ExpD{\xi}{\|\nabla f_\xi(x)-\nabla f(x)\|^2}\le \sigma_f^2\quad\text{and}\quad \ExpD{\zeta}{\|\nabla g_\zeta(x)-\nabla g(x)\|^2}\le \sigma_g^2\quad \forall x\in \H.
        \]
    \end{enumerate}
\end{assumption}

Uniformly bounded variance is a standard assumption in stochastic optimization \cite{robbins_convergence_1971,nemirovski_robust_2009}, although recent trends aim to relax it under weaker growth or variance-at-solution conditions \cite{bach_nonasymptotic_2011,needell_stochastic_2016,khaled_better_2023,alacaoglu_weaker_2025,cortild_biasoptimal_2025,garrigos_lastiterate_2025,cortild_stochastic_2026}, at the cost of a more complex error handling. We retain the bounded-variance assumption here in order to simplify the results and isolate the difficulties arising from the stochasticity itself.

Assumption \ref{ass:main} does not require the lower-level gradient $\nabla g$ to be uniformly bounded, nor does it require the individual realizations $f_\xi$ and $g_\zeta$ to be smooth. This is in contrast with \cite{ahmadi_stochastic_2026}, where boundedness of $\nabla g$ is required, and smoothness of the stochastic realizations is used for their variance-reduced variant.

Since Problem \eqref{eq:SSNBP} is nonconvex, we do not aim to solve it globally and instead introduce a notion of stationarity. We follow the convention of \cite{cao_complexity_2025} and measure stationarity in terms of squared gradient norms. This choice is equivalent to measuring it in terms of norms and rescaling $(\varepsilon_f, \varepsilon_g)$, and is convenient as our analysis naturally controls the squared quantities.

\begin{definition}[$(\varepsilon_f, \varepsilon_g)$-stationarity \cite{cao_complexity_2025}]\label{def:stationarity}
    A point $x\in \H$ is \textit{$(\varepsilon_f, \varepsilon_g)$-stationary} for Problem \eqref{eq:SSNBP} if there exists $\lambda\ge 0$ satisfying 
    \[
        \|\nabla f(x)+\lambda \nabla g(x)\|^2\le \varepsilon_f \quad \text{and}\quad \|\nabla g(x)\|^2\le \varepsilon_g.
    \]
\end{definition}

The second condition requires $x$ to be near-stationary for the lower-level objective. We emphasize that, since $g$ is nonconvex, lower-level stationarity does in general not mean proximity to $\argmin g$. Definition \ref{def:stationarity} should therefore be understood as a first-order approximation for solving Problem \eqref{eq:SSNBP}. The first condition describes first-order compatibility between the upper- and lower-level objectives. In fact, up to error $\sqrt{\varepsilon_f}$, the component of $\nabla f(x)$ orthogonal to $\nabla g(x)$ vanishes, while its parallel component either vanishes or points in the opposite direction to $\nabla g(x)$. As such, there is no first-order direction that can substantially decrease $f$ without increasing $g$.
\section{Stochastic Dynamic Barrier Gradient Descent}\label{sec:alg}

The deterministic Dynamic Barrier Gradient Descent (DBGD) method of \cite{cao_complexity_2025} updates $x_k$ with step-size $\eta_k>0$ along the direction
\[
    d_k^{\text{DBGD}}=\nabla f(x_k)+\lambda_k^{\text{DBGD}} \nabla g(x_k), \quad \lambda_k^{\text{DBGD}}=\frac{[\beta_k\|\nabla g(x_k)\|^2-\langle \nabla f(x_k), \nabla g(x_k)\rangle]_+}{\|\nabla g(x_k)\|^2}.
\]
The choice of multiplier $\lambda_k^{\text{DBGD}}$ is so that $\langle \nabla g(x_k), d_k^{\text{DBGD}}\rangle\ge \beta_k\|\nabla g(x_k)\|^2$, enforcing descent in the lower-level objective while retaining as much as possible in the upper-level. This multiplier may be viewed as the Lagrange multiplier associated with the projection problem $\min\{\frac{1}{2}\|\nabla f(x_{k})-d\|^{2}\colon \; \beta_{k}\|\nabla g(x_{k})\|^{2}\leq \langle d,\nabla g(x_{k})\rangle\}$, namely
\begin{equation}\label{eq:projection_mult}
    \lambda_{k}^{\text{DBGD}}=\arg\max_{\lambda\geq 0}\min_{d}\left\{\frac{1}{2}\|\nabla f(x_{k})-d\|^{2}+\lambda(\beta_{k} \|\nabla g(x_{k})\|^{2}-\langle d,\nabla g(x_{k})\rangle)\right\}.
\end{equation}
The term \textit{dynamic barrier} refers to the lower bound on $\langle d,\nabla g(x_{k})\rangle$ enforced in the projection dynamics, which provides a trade-off between loss minimization and constraint satisfaction, and is not directly related to an explicit barrier function.

In the stochastic setting, we no longer access the full gradients $\nabla g(x_k)$ and $\nabla f(x_k)$, but only unbiased stochastic mini-batch estimators of batch-sizes $B_{f, k}$ and $B_{g, k}$ respectively, given by
\begin{equation}\label{eq:minibatch}
    \nablafk = \frac{1}{B_{f, k}}\sum_{i=1}^{B_{f, k}}\nabla f_{\xi_{k, i}}(x_k)\quad \text{and}\quad \nablagk = \frac{1}{B_{g, k}}\sum_{j=1}^{B_{g, k}}\nabla g_{\zeta_{k, j}}(x_k).
\end{equation}
Directly substituting these quantities into the multiplier definition may be undefined, as $\|\nabla g_k\|$ may be small, or even zero, far from lower-level stationary points. 

We instead regularize the denominator with a parameter $\rho_k>0$, thus dividing by $\|\nablagk\|^2+\rho_k$ rather than $\|\nablagk\|^2$. This prevents the singularity in the multiplier definition, while recovering the deterministic definition as $\rho_k\to 0$. This yields the multiplier definition
\begin{equation}\label{eq:mult}
    \hat \lambda_k=\frac{[\beta_k\|\nablagk\|^2-\langle \nablafk, \nablagk\rangle]_+}{\|\nablagk\|^2+\rho_k}.
\end{equation}
Importantly, unlike the multiplier definition in \cite{ahmadi_stochastic_2026}, we do not add a corresponding $\rho_k\beta_k$ term to the numerator. Although the definitions are very similar, they lead to fundamentally different convergence properties, and require distinct arguments.

We introduce $\lambda_k$ and $d_k$ as quantities for our analysis, but they do not need to be computed:
\[
    \lambda_k=\frac{[\beta_k\|\nabla g(x_k)\|^2-\langle \nabla f(x_k), \nabla g(x_k)\rangle]_+}{\|\nabla g(x_k)\|^2+\rho_k}, \quad d_k=\nabla f(x_k)+\lambda_k \nabla g(x_k).
\]
In the following algorithm, $\eta_k>0$ is the step-size, $\beta_k>0$ is the barrier parameter, $\rho_k>0$ is the regularization parameter, and $B_{f,k}\ge1$ and $B_{g,k}\ge1$ represent the batch-sizes for the upper- and lower-level gradient estimators, respectively.

\begin{algorithm}[H]
\caption{Stochastic Dynamic Barrier Gradient Descent (SDBGD)}\label{alg:SDBGD}
\begin{algorithmic}
\For{$k=0, \ldots, K-1$}
\State Draw independent $\xi_{k, i}$ and $\zeta_{k, j}$ for $i=1, \ldots, B_{f, k}$ and $j=1, \ldots, B_{g, k}$. 
\State Define the independent mini-batches as in \eqref{eq:minibatch}. 
\State Compute $\hat\lambda_k$ according to \eqref{eq:mult}. 
\State Compute $\hat d_k=\nablafk+\hat\lambda_k\nablagk$. 
\State Update $x_{k+1}=x_k-\eta_k \hat d_k$.
\EndFor
\end{algorithmic}
\end{algorithm}

We make use of the random iterate $x_{N_K}$, where $N_K$ is drawn at random according to 
\begin{equation}\label{eq:def_P}
    \mathbb{P}(N_K=k)=\frac{\eta_k\beta_k}{\sum_{k'=0}^{K-1}\eta_{k'}\beta_{k'}}\quad \text{for $k\in \{0, \ldots, K-1\}$}.
\end{equation}
Balancing the stochastic error with the regularization parameter $\rho_k$ yields the following horizon-dependent and anytime rates. The proof is deferred to Appendix \ref{app:proof_coro_rates}, and all proportionality constants are independent of $K$ (or $k$).

\begin{corollary}[Convergence Rates]\label{coro:rates}
    Let Assumptions \ref{ass:problem} and \ref{ass:main} hold, let $(x_k)$ be the iterates generated by Algorithm \ref{alg:SDBGD}, and let $N_K$ be chosen at random according to \eqref{eq:def_P}.
    \begin{enumerate}
        \item \textbf{(Horizon-Dependent)} Given a horizon $K\ge 1$, let $a\in (0, 1/3)$, and select 
        \[
            \eta_k\equiv \eta \propto K^{-a}\in (0, (2L_f+4L_g)^{-1}), ~~~ \beta_k\equiv \beta \propto \eta^{-3}K^{-1}\in (0, 1), ~~~ \rho_k\equiv \rho \propto \eta^{-2}K^{-2},
        \]
        \[
            B_{f,k}\equiv B_f\propto \eta^4K^2, \quad \text{and}\quad B_{g,k}\equiv B_g\propto \eta^6K^4.
        \]
        Then it holds that
        \[
            \Exp{\|d_{N_K}\|^2}\le \mathcal O(K^{-1+2a})\quad \text{and}\quad \Exp{\|\nabla g(x_{N_K})\|^2}\le \mathcal O(K^{-2a}).
        \]
        \item \textbf{(Anytime)} Let $a\in (0, 1/3)$, and select 
        \[
            \eta_k\propto (k+1)^{-a}\in (0, (2L_f+4L_g)^{-1}), \quad \beta_k \propto \eta_k^{-3}(k+1)^{-1}\in (0,1), \quad \rho_k \propto \eta_k^{-2}(k+1)^{-2},
        \]
        \[
            B_{f,k}\propto \eta_k^4(k+1)^2, \quad \text{and}\quad B_{g,k}\propto \max(\eta_k^{-2}, \eta_k^6(k+1)^4).
        \]
        Then it holds that, for any $K\ge 2$, 
        {\small\[
            \Exp{\|d_{N_K}\|^2}\le \begin{dcases}
                \mathcal O(K^{-2a})\quad &\text{if $0<a<1/4$}, \\
                \mathcal O(K^{2a-1}\log(K))&\text{if $1/4\le a<1/3$},
            \end{dcases}
        \]
        and
        \[
            \Exp{\|\nabla g(x_{N_K})\|^2}\le \mathcal O\left(K^{-2a}\log(K)\right).
        \]}
    \end{enumerate}
\end{corollary}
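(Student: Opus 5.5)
The plan is to derive the corollary from a single nonasymptotic bound for general parameter sequences and then substitute the two schedules. The general bound I will aim for (its proof is not shown here) is
\[
\Exp{\|d_{N_K}\|^2}\lesssim \frac{\Delta_f+\sum_k E_k^f}{\sum_k\eta_k\beta_k}\cdot\max_k\beta_k,
\qquad
\Exp{\|\nabla g(x_{N_K})\|^2}\lesssim \frac{\Delta_g+\sum_k E_k^g}{\sum_k\eta_k\beta_k},
\]
where $\Delta_f=f(x_0)-\inf f$, $\Delta_g=g(x_0)-\min g$, and $E_k^f,E_k^g$ are error terms built from $\eta_k$, $\rho_k$, $\sigma_f^2/B_{f,k}$ and $\sigma_g^2/B_{g,k}$. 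I expect this shape because the sampling law \eqref{eq:def_P} weights iteration $k$ by $\eta_k\beta_k$. If the weighting or the prefactor comes out differently, I will rebalance at the final substitution step.

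\textbf{Deterministic inequalities for the regularized multiplier.} These come straight from the definition of $\lambda_k$. Multiplying out gives $\lambda_k(\|\nabla g\|^2+\rho_k)=[\beta_k\|\nabla g\|^2-\langle\nabla f,\nabla g\rangle]_+$. Hence, whenever $\lambda_k>0$,
\[
\langle \nabla g(x_k),d_k\rangle=\beta_k\|\nabla g(x_k)\|^2-\rho_k\lambda_k .
\]
The only price of the regularization is therefore the additive term $\rho_k\lambda_k$. I will also use the size bound
\[
\lambda_k\le \beta_k+\frac{G_f\|\nabla g\|}{\|\nabla g\|^2+\rho_k}\le \beta_k+\frac{G_f}{2\sqrt{\rho_k}},
\]
which gives $\rho_k\lambda_k\lesssim \rho_k\beta_k+\sqrt{\rho_k}$.

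\textbf{One-step descent.} I apply $L_g$-smoothness to $g$ and $L_f$-smoothness to $f$ along $\hat d_k$.
\begin{itemize}
\item \emph{Lower level.} Summing the $g$-inequality and telescoping with $\min g>-\infty$ bounds $\sum_k\eta_k\beta_k\|\nabla g\|^2$.
\item \emph{Upper level.} I use the DBGD identity $\langle\nabla f,d_k\rangle=\|d_k\|^2-\lambda_k\langle\nabla g,d_k\rangle$. The last term is controlled by $\lambda_k\beta_k\|\nabla g\|^2$, and that is in turn absorbed using the $g$-bound. The step-size cap $\eta<(2L_f+4L_g)^{-1}$ absorbs the quadratic $\|\hat d_k\|^2$ terms.
\end{itemize}

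\textbf{Main obstacle: controlling $\hat d_k-d_k$.} The expectation of $\langle\nabla f, \hat d_k-d_k\rangle$ is not zero, because $\hat\lambda_k$ is a nonlinear function of $\nablafk$ and $\nablagk$, so the direction estimate is biased. To handle this I will:
\begin{enumerate}
\item Use that $u\mapsto[u]_+$ is $1$-Lipschitz and that the denominator is at least $\rho_k$, to show $|\hat\lambda_k-\lambda_k|$ is of order $\rho_k^{-1}$ times the gradient errors. I will keep the $\|\nabla g\|$-factors explicit so that no uniform bound on $\nabla g$ is needed.
\item Apply Young's inequality, so that the bias is paid for by the variances $\sigma_f^2/B_{f,k}$ and $\sigma_g^2/B_{g,k}$, multiplied by negative powers of $\rho_k$.
\end{enumerate}
This is why $B_g$ must grow much faster than $B_f$. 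It is the step I expect to be the most delicate, since the bookkeeping fixes the exponents $B_f\propto\eta^4K^2$ and $B_g\propto\eta^6K^4$.

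\textbf{Horizon-dependent rates.} With $\eta=K^{-a}$ the schedule gives $\beta=K^{3a-1}$ (which lies in $(0,1)$ because $a<1/3$) and $\rho=K^{2a-2}$. Then $\sum_k\eta\beta=K^{2a}$, so $\Exp{\|\nabla g\|^2}=\mathcal O(K^{-2a})$, provided every error term summed over $K$ steps is $\mathcal O(1)$; the batch sizes are chosen exactly for this. For the $d$-bound, the effective per-step normalization is $K\eta$, and the residual $\rho\lambda$ contributes roughly $\sqrt\rho/\beta$. Both give $\mathcal O(K^{-1+2a})$, which I will check by direct substitution.

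\textbf{Anytime rates.} I substitute the $k$-dependent schedules and estimate the sums by integrals.
\begin{itemize}
\item \emph{Normalization.} Here $\sum_{k<K}\eta_k\beta_k\asymp\sum_k (k+1)^{2a-1}\asymp K^{2a}$.
\item \emph{Error sums.} These behave like $\sum_k(k+1)^{-1}=\mathcal O(\log K)$, which produces the $\log K$ factor in the $g$-bound.
\item \emph{The $d$-bound.} It combines a term of order $K^{-2a}$ with a harmonic-type term of order $K^{2a-1}\log K$. Comparing them splits the cases at $a=1/4$.
\item \emph{Extra lower-level batch size.} The term $\eta_k^{-2}$ in $B_{g,k}$ is needed for the early iterations, where $\eta_k^6(k+1)^4$ is too small to control the $L_g\eta_k^2$-weighted variance of $\nablagk$.
\end{itemize}
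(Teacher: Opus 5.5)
The gap is in the step you flag as the main obstacle. Bounding $|\hat\lambda_k-\lambda_k|$ via the $1$-Lipschitzness of $[\cdot]_+$ and the lower bound $\|\nabla g_k\|^2+\rho_k\ge\rho_k$ gives a sensitivity of order $\rho_k^{-1}$. The numerator differences also carry unbounded factors, coming from $\beta_k(\|\nabla g_k\|^2-\|\nabla g(x_k)\|^2)$ and $\langle\nabla f_k-\nabla f(x_k),\nabla g_k\rangle$. On top of that, $\hat d_k-d_k$ contains $(\hat\lambda_k-\lambda_k)\nabla g_k$, a further factor $\|\nabla g_k\|$, so its second moment would require fourth moments of the noise, which bounded variance does not give.

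Even setting these factors aside, a $\rho_k^{-1}$ sensitivity is incompatible with the stated schedule. The bias enters the upper-level descent linearly, as $\eta_kG_f\,\mathbb{E}_k\|\hat d_k-d_k\|$. So the rate $K^{2a-1}$ for $\|d_{N_K}\|^2$ needs $\mathbb{E}_k\|\hat d_k-d_k\|\lesssim\beta\eta\asymp\eta^{-2}K^{-1}$. But with $B_g\propto\eta^6K^4$ and $\rho\propto\eta^{-2}K^{-2}$ you get $\nu_g/\rho\asymp\eta^{-3}K^{-2}\cdot\eta^{2}K^{2}=\eta^{-1}=K^{a}$, which grows with $K$. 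Your bookkeeping therefore cannot produce $B_g\propto\eta^6K^4$; it would force at least $B_g\propto\eta^8K^6$, i.e. $\varepsilon^{-10}$ lower-level calls at $a=1/4$, no better than PR-SDBPG. Likewise, the $\|\nabla g\|$-weighted leftovers come with coefficients such as $\nu_f/\rho\asymp K$, far larger than what the $\frac{\beta_k}{2}\|\nabla g(x_k)\|^2$ term can absorb.

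The missing idea is to bound the whole direction map rather than the multiplier. The map $D_{\beta,\rho}(a,b)=a+\frac{[\beta\|b\|^2-\langle a,b\rangle]_+}{\|b\|^2+\rho}\,b$ is Lipschitz uniformly in $b$, at a cost of only $\rho^{-1/2}$:
\begin{itemize}
\item In $a$ it is $1$-Lipschitz, because on the active region its Jacobian is $I-\frac{b\otimes b}{\|b\|^2+\rho}$.
\item In $b$ it is $\bigl(\frac{3\beta}{2}+\frac{2\|a\|}{\sqrt\rho}\bigr)$-Lipschitz. To see this, differentiate $\lambda(a,b)\,b$ and match every power of $\|b\|$ against the full denominator, using $\frac{t}{t^2+\rho}\le\frac{1}{2\sqrt\rho}$, $\frac{t^2}{(t^2+\rho)^2}\le\frac{1}{4\rho}$, $\frac{t^3}{(t^2+\rho)^2}\le\frac{1}{2\sqrt\rho}$ and $\lambda\le\beta+\frac{\|a\|}{2\sqrt\rho}$. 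Then integrate along segments cut into finitely many pieces, each lying in one region.
\end{itemize}
Splitting the difference so that the $b$-constant is evaluated at the true $\nabla f(x_k)$ yields
\[
\mathbb{E}_k\|\hat d_k-d_k\|^2\le\tau_k^2=2\nu_{f,k}^2+2\Bigl(\frac{3\beta_k}{2}+\frac{2G_f}{\sqrt{\rho_k}}\Bigr)^2\nu_{g,k}^2,
\]
with no $\|\nabla g\|$ factors and only second moments. The bias is then paid by $\eta_kG_f\tau_k$ in the upper-level descent, and by $\eta_k\|\nabla g(x_k)\|\tau_k\le\eta_k\bigl(\frac{\beta_k}{2}\|\nabla g(x_k)\|^2+\frac{\tau_k^2}{2\beta_k}\bigr)$ in the lower-level descent. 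Balancing $\tau\asymp\beta\eta$ is exactly what yields $B_f\propto\eta^4K^2$ and $B_g\propto\eta^6K^4$.

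Separately, your heuristic that the $\rho_k\lambda_k$ residual contributes $\sqrt\rho/\beta$ to the $d$-bound would break the claimed rate. Indeed $\sqrt\rho/\beta\asymp\eta^2=K^{-2a}$, which is not $\mathcal O(K^{2a-1})$ when $a<1/4$. In the paper this residual enters the $d$-bound as $\sqrt\rho/\eta\asymp\eta^{-2}K^{-1}$. That happens after the coupling term $\beta_kG_f\|\nabla g(x_k)\|$ is resolved with $x\le C+D\sqrt x\Rightarrow x\le 2C+D^2$; the quantity $\sqrt\rho/\beta$ appears only in the $g$-bound.
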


The parameter $a\in (0, 1/3)$ controls the tradeoff between the two stationary residuals. The balanced choice $a=1/4$ yields a rate of ${\mathcal O}(K^{-1/2})$ or $\tilde{\mathcal O}(K^{-1/2})$ for both.

\begin{corollary}[Oracle Complexity]\label{coro:p1}
    Under the horizon-dependent schedule of Corollary \ref{coro:rates} with $a=1/4$, we reach a $(\varepsilon, \varepsilon)$-stationary point in $\mathcal O(\varepsilon^{-2})$ iterations, resulting in $\mathcal O(\varepsilon^{-4})$ upper-level and $\mathcal O(\varepsilon^{-7})$ lower-level stochastic oracle calls.
\end{corollary}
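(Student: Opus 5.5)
The plan is to instantiate the horizon-dependent schedule of Corollary \ref{coro:rates} at $a=1/4$, read off the iteration count from the two residual bounds, and then sum the batch sizes over the $K$ iterations.

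\textbf{Stationarity certificate.} By definition, $d_{N_K}=\nabla f(x_{N_K})+\lambda_{N_K}\nabla g(x_{N_K})$. The multiplier $\lambda_{N_K}\ge 0$ is the positive part of a quantity divided by a positive denominator, so it is a valid choice of $\lambda$ in Definition \ref{def:stationarity}. At $a=1/4$, Corollary \ref{coro:rates} gives
\[
\Exp{\|d_{N_K}\|^2}\le \mathcal O(K^{-1/2})\quad\text{and}\quad\Exp{\|\nabla g(x_{N_K})\|^2}\le \mathcal O(K^{-1/2}).
\]
Choosing $K=\lceil C\varepsilon^{-2}\rceil$ for a suitable constant $C$, independent of $\varepsilon$, makes both expectations at most $\varepsilon$. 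Hence $x_{N_K}$ is $(\varepsilon,\varepsilon)$-stationary in expectation, which is the sense in which the rates are stated, after $\mathcal O(\varepsilon^{-2})$ iterations.

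\textbf{Parameter admissibility.} We check the constraints of the schedule. We have $\eta\propto K^{-1/4}$, and the proportionality constant can be taken small enough that $\eta<(2L_f+4L_g)^{-1}$. Next, $\beta\propto\eta^{-3}K^{-1}=K^{-1/4}$, which lies in $(0,1)$ for $K$ large or for a small enough constant. Finally, $\rho\propto\eta^{-2}K^{-2}=K^{-3/2}>0$.

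\textbf{Oracle counts.} With $\eta\propto K^{-1/4}$, the batch sizes are
\[
B_f\propto\eta^4K^2=K\quad\text{and}\quad B_g\propto\eta^6K^4=K^{5/2}.
\]
Each iteration uses $B_f$ upper-level and $B_g$ lower-level stochastic gradients. The totals are therefore $KB_f\propto K^2=\mathcal O(\varepsilon^{-4})$ and $KB_g\propto K^{7/2}=\mathcal O(\varepsilon^{-7})$.

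Batch sizes must be integers at least $1$, so we round up. This adds at most one sample per iteration to each count, i.e.\ $\mathcal O(K)=\mathcal O(\varepsilon^{-2})$ in total, which is dominated by the bounds above. The only mildly delicate point is this bookkeeping of constants and rounding. Everything else is direct substitution into Corollary \ref{coro:rates}, so I expect no real obstacle.
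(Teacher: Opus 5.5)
Your proposal is correct and follows the paper's own (implicit) argument. At $a=1/4$, Corollary \ref{coro:rates} gives $\mathcal O(K^{-1/2})$ for both residuals, so $K=\mathcal O(\varepsilon^{-2})$ iterations suffice, and summing $B_f\propto\eta^4K^2=K$ and $B_g\propto\eta^6K^4=K^{5/2}$ over $K$ iterations yields $\mathcal O(\varepsilon^{-4})$ and $\mathcal O(\varepsilon^{-7})$ oracle calls; your remarks on reading stationarity in expectation with the certificate $\lambda_{N_K}\ge 0$ and on rounding batch sizes to integers only make explicit what the paper leaves tacit.
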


\section{Numerical Illustration}\label{sec:num}

We consider the following toy problem: 
\[
    \min_{x\in \R^2}\left\{f(x)=\sqrt{1+\|x-c\|^2}\colon \quad x\in \argmin_{y=(y_1,y_2)\in \R^2}g(x)=2-\cos(y_1)-\exp(-y_2^2/2)\right\},
\]
using stochastic gradients $\nabla f_\xi =\nabla f+\xi$ and $\nabla g_\zeta=\nabla g + \zeta$ with Gaussian noise. We compare SDBGD with SDBPG, PR-SDBPG and VR-PR-SDBPG \cite{ahmadi_stochastic_2026} under a common oracle budget of $10^8$, using anytime schedules for all methods. The details are provided in Appendix \ref{app:XP}.

In Figure \ref{fig:comparison}, we plot the upper- and lower-level stationary measures for all four methods. While our method does not empirically improve upon the previously proposed ones, its theoretical guarantees are stronger and does not require any unverifiable assumption.

\begin{figure}[H]
    \centering
    \includegraphics[width=1\linewidth]{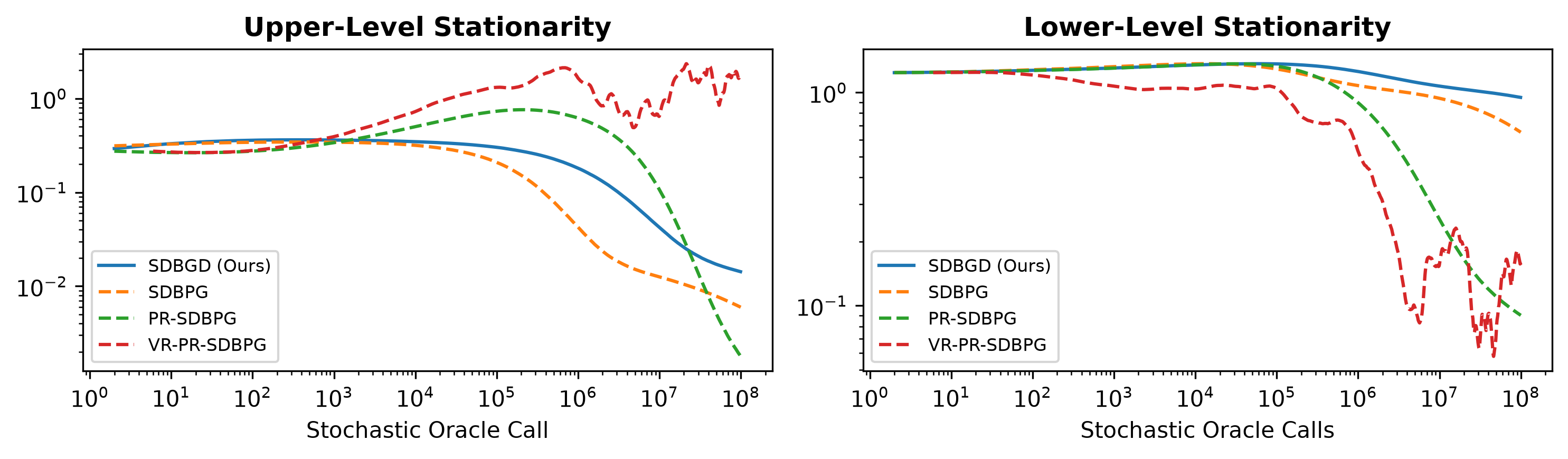}
    \caption{Comparison of stationarity measures in terms of stochastic oracle calls.}
    \label{fig:comparison}
\end{figure}
\section{Conclusion}

We introduced SDBGD, a denominator-regularized stochastic variant of DBGD \cite{cao_complexity_2025} for stochastic simple nonconvex bilevel programming. Under standard assumptions, SDBGD achieves $\mathcal O(\varepsilon^{-2})$ iteration complexity, using $\mathcal O(\varepsilon^{-4})$ upper-level and $\mathcal O(\varepsilon^{-7})$ lower-level stochastic oracle calls. These rates do not require an unverifiable trajectory-dependent ``rare-visit'' assumption as in \cite{ahmadi_stochastic_2026}, and improve upon the assumption-free guarantees of \cite{ahmadi_stochastic_2026}. We additionally provided an anytime parameter schedule and a simple numerical illustration. Overall, our results show that minor differences in the multiplier definition can substantially affect the theoretical guarantees.

\textbf{Open Question.} The stochastic variants of DBGD proposed here and in \cite{ahmadi_stochastic_2026} differ only slightly in their multiplier definition, yet yield considerably different guarantees. This suggests directly substituting the gradient for a stochastic estimator thereof may not be the natural stochastic analogue of DBGD. Since deterministic DBGD admits a projected dynamical system interpretation \eqref{eq:projection_mult}, an interesting direction might be to consider a stochastic projected dynamical system to derive the stochastic multiplier. We leave the following for future work:
\begin{quote}
    \textit{What is the appropriate stochastic analogue of deterministic multiplier-based bilevel methods?}
\end{quote}

\textbf{Acknowledgments.} Daniel Cortild acknowledges the support of the Clarendon Funds Scholarship. 
Mathias Staudigl's research is supported by the Deutsche Forschungsgemeinschaft (DFG) -- Projektnummer 556222748 ``non-stationary hierarchical minimization’'.
Juan Peypouquet benefited from the support of the FMJH Program Gaspard Monge for optimization and operations research and their interactions with data science.
Coralia Cartis' work was supported by the Hong Kong Innovation and Technology Commission (InnoHK Project CIMDA) and by the EPSRC grant EP/Y028872/1, Mathematical Foundations of Intelligence: An “Erlangen Programme” for AI. 

\setlength{\bibsep}{2pt}
\bibliographystyle{abbrv}
\bibliography{references,additional_references}

\newpage
\appendix

\section{Proofs}\label{app:proofs}

Throughout Appendix \ref{app:proofs}, we let $(x_k)$ be the iterates generated by Algorithm \ref{alg:SDBGD}, and we assume Assumptions \ref{ass:problem} and \ref{ass:main} hold. We define the filtration $\mathcal F_k=\sigma(x_0, \{\xi_{\ell, i}, \zeta_{\ell, j}\}_{\ell < k})$ so that $x_k$ is $\mathcal F_k$-measurable, and denote by $\ExpD{k}{\cdot}=\Exp{\cdot|\mathcal F_k}$ the conditional expectation. We let $\Exp{\cdot}$ denote the total expectation. For convenience, we define the following shorthands
\[
    \begin{dcases}
        \Delta_f& =f(x_0)-\inf f, \\
        \Delta_g& =g(x_0)-\inf g, \\
        \nu_{f,k}^2 &= \frac{\sigma_f^2}{B_{f,k}}, \\
        \nu_{g,k}^2 &= \frac{\sigma_g^2}{B_{g,k}}, \\
        \tau_k^2 &= 2\nu_{f,k}^2+2\left(\frac{3\beta_k}{2}+\frac{2G_f}{\sqrt {\rho_k}}\right)^2\nu_{g,k}^2, \\
        C_{f,k} &= G_f\tau_k+L_f\eta_k\tau_k^2, \\
        C_{g,k} &= \beta_k \rho_k+\frac{G_f}{2}\sqrt{\rho_k}+\frac{\tau_k^2}{2\beta_k}+L_g\eta_k \tau_k^2, \\
        R_k &= 4C_{f,k}+8\beta_k C_{g,k}+\frac{C_{g,k}}{L_g\eta_k}+8\beta_k L_g\eta_k G_f^2, \\
        \delta_{f, k}&= f(x_k)-\ExpD{k}{f(x_{k+1})}, \\
        \delta_{g, k}&= g(x_k)-\ExpD{k}{g(x_{k+1})}.
    \end{dcases}
\]

\subsection{Preliminary Results}

\begin{lemma}[{Quadratic Bound \cite{cao_complexity_2025}}]\label{lem:quad}
	For $D\ge 0$, if $x\ge 0$ and $x\le C+D\sqrt{x}$, then $x\le 2C+D^2$. 
\end{lemma}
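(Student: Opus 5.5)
The statement to prove is Lemma~\ref{lem:quad}: if $x\ge 0$, $D\ge 0$ and $x\le C+D\sqrt{x}$, then $x\le 2C+D^2$. The plan is to absorb the square-root term with Young's inequality and then rearrange. No case split on the sign of $C$ is needed. Note that $C\ge -D\sqrt x$ follows automatically, but the argument below never uses this.

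\textbf{Step 1 (Young's inequality).} Since $D\ge0$ and $\sqrt{x}\ge 0$, the inequality $ab\le \tfrac{a^2}{2}+\tfrac{b^2}{2}$ with $a=D$ and $b=\sqrt x$ gives
\[
  D\sqrt{x}\le \frac{D^2}{2}+\frac{x}{2}.
\]

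\textbf{Step 2 (substitute and rearrange).} Inserting this into the hypothesis yields
\[
  x\le C+\frac{D^2}{2}+\frac{x}{2}.
\]
Subtracting $x/2$ from both sides gives $\tfrac{x}{2}\le C+\tfrac{D^2}{2}$. Multiplying by $2$ gives $x\le 2C+D^2$, which is the claim.

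\textbf{Alternative route and main obstacle.} One could instead treat $t=\sqrt x\ge 0$ as a variable in the quadratic inequality $t^2-Dt-C\le 0$. This forces $t\le \tfrac{D+\sqrt{D^2+4C}}{2}$, and squaring together with $(u+v)^2\le 2u^2+2v^2$ gives the same bound. That route requires checking that $D^2+4C\ge 0$, which follows from feasibility. The Young's-inequality route avoids this check entirely, so no step here is a real obstacle.
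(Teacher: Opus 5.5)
Your proof is correct and is exactly the paper's argument: the paper's one-line proof also applies $D\sqrt{x}\le \frac{D^2}{2}+\frac{x}{2}$ and rearranges to get $x\le 2C+D^2$. Your alternative quadratic-formula route is also valid, but it is unnecessary here.
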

\begin{proof}
    This follows from $D\sqrt x\le \frac{D^2}{2}+\frac{x}{2}$.
\end{proof}

\begin{lemma}[Multiplier Properties]\label{lem:helpful}
    Let $a, b\in \H$ and $\beta, \rho>0$. Define 
    \[
        \lambda=\frac{[\beta\|b\|^2-\langle a, b\rangle]_+}{\|b\|^2+\rho}\quad \text{and}\quad d=a+\lambda b.
    \]
    Then it holds that 
    \begin{subequations}
    \begin{align}
        &\langle b, d\rangle \ge \beta\|b\|^2-\rho\lambda, \label{lem:helpful:1a}\\
        &\lambda \langle b, d\rangle =\beta \lambda \|b\|^2-\rho\lambda^2, \label{lem:helpful:1b}\\
        &0\le \lambda \le \beta+\frac{\|a\|}{2\sqrt{\rho}}, \label{lem:helpful:1c} \\
       &\lambda \|b\|^2\le \beta\|b\|^2+\|a\|\cdot \|b\|. \label{lem:helpful:1d}
    \end{align}
    \end{subequations}
\end{lemma}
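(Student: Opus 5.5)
The proof will be direct: split on whether the positive part in the definition of $\lambda$ is active, and in the active case use the identity obtained by clearing the denominator. Let $N=[\beta\|b\|^2-\langle a,b\rangle]_+$. When $N=0$ we have $\lambda=0$. Then \eqref{lem:helpful:1b}, \eqref{lem:helpful:1c} and \eqref{lem:helpful:1d} are trivial, and \eqref{lem:helpful:1a} reads $\langle a,b\rangle\ge\beta\|b\|^2$, which is exactly the inactivity condition.

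When $N>0$, we have $\lambda(\|b\|^2+\rho)=\beta\|b\|^2-\langle a,b\rangle$. This gives
\[
\langle b,d\rangle=\langle a,b\rangle+\lambda\|b\|^2=\beta\|b\|^2-\rho\lambda ,
\]
which is \eqref{lem:helpful:1a} with equality. Multiplying by $\lambda$ gives \eqref{lem:helpful:1b}. In the inactive case $\lambda=0$, so both sides of \eqref{lem:helpful:1b} vanish and it holds in every case.

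For \eqref{lem:helpful:1c}, nonnegativity is clear. For the upper bound, use $N\le\beta\|b\|^2+\|a\|\|b\|$, by Cauchy–Schwarz, to get
\[
\lambda\le \frac{\beta\|b\|^2}{\|b\|^2+\rho}+\frac{\|a\|\,\|b\|}{\|b\|^2+\rho}\le \beta+\frac{\|a\|}{2\sqrt\rho}.
\]
The last step uses $\|b\|^2+\rho\ge 2\sqrt\rho\,\|b\|$, which is AM–GM.

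For \eqref{lem:helpful:1d}, multiply the same numerator bound by $\|b\|^2/(\|b\|^2+\rho)\le 1$. This gives $\lambda\|b\|^2\le N\le \beta\|b\|^2+\|a\|\|b\|$.

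The only step needing any care is the AM–GM bound in \eqref{lem:helpful:1c}, together with handling the $[\cdot]_+$ cases cleanly. Everything else is algebra from the defining identity.
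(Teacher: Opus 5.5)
Your proof is correct and follows essentially the same route as the paper: the same case split on the sign of $\beta\|b\|^2-\langle a,b\rangle$, then equality in \eqref{lem:helpful:1a} obtained by clearing the denominator and multiplying by $\lambda$ for \eqref{lem:helpful:1b}. For the remaining two bounds you also match the paper, using Cauchy--Schwarz with $\|b\|^2+\rho\ge 2\sqrt{\rho}\,\|b\|$ (the paper's $t/(t^2+\rho)\le 1/(2\sqrt\rho)$) for \eqref{lem:helpful:1c} and the factor $\|b\|^2/(\|b\|^2+\rho)\le 1$ for \eqref{lem:helpful:1d}.
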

\begin{proof}
    We consider two cases:
    \begin{itemize}
        \item If $\beta\|b\|^2-\langle a, b\rangle\le 0$, then $\lambda=0$ and $d=a$. As such, \eqref{lem:helpful:1a}-\eqref{lem:helpful:1d} are readily verified.
        \item If $\beta\|b\|^2-\langle a, b\rangle> 0$, then 
        \[
            \lambda=\frac{\beta\|b\|^2-\langle a, b\rangle}{\|b\|^2+\rho}.
        \]
        As $d=a+\lambda b$, Inequality \eqref{lem:helpful:1a} holds with equality. Multiplying by $\lambda$ yields Equality \eqref{lem:helpful:1b}. Moreover, it holds that 
        \[
            \lambda\le \frac{\beta\|b\|^2+\|a\|\cdot \|b\|}{\|b\|^2+\rho}\le \beta+\frac{\|a\|\cdot \|b\|}{\|b\|^2+\rho}\le \beta+\frac{\|a\|}{2\sqrt{\rho}},
        \]
        where the latter follows by $\frac{t}{t^2+\rho}\le \frac{1}{2\sqrt{\rho}}$ for all $t\ge 0$. As such, Inequality \eqref{lem:helpful:1c} holds. Finally, 
        \[
            \lambda \|b\|^2=\left(\beta\|b\|^2-\langle a, b\rangle\right)\cdot \frac{\|b\|^2}{\|b\|^2+\rho}\le \beta \|b\|^2+\|a\|\cdot \|b\|,
        \]
        thus yielding Inequality \eqref{lem:helpful:1d}.
    \end{itemize}
\end{proof}

\begin{lemma}[Stability of Stochastic Direction]\label{lem:stability}
    Define $D_{\beta, \rho}\colon \H\times \H\to \H$ by
    \[
        D_{\beta, \rho}(a, b)=a+\frac{[\beta \|b\|^2-\langle a, b\rangle]_+}{\|b\|^2+\rho}b.
    \]
    Then, for any $a,a',b,b'\in \H$, it holds that
    \[
        \|D_{\beta, \rho}(a, b)-D_{\beta, \rho}(a', b')\|\le \|a-a'\|+\left(\frac{3\beta}{2}+\frac{2\|a\|}{\sqrt{\rho}}\right)\|b-b'\|.
    \]
    Specifically, almost surely,
    \[
        \ExpD{k}{\|d_k-\hat d_k\|^2}\le \tau^2_k.
    \]
\end{lemma}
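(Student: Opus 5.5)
Write $\lambda(a,b)=[\beta\|b\|^2-\langle a,b\rangle]_+/(\|b\|^2+\rho)$, so that $D_{\beta,\rho}(a,b)=a+\lambda(a,b)b$. The plan is to split the difference through the intermediate point $(a',b)$, which changes one argument at a time:
\[
D_{\beta,\rho}(a,b)-D_{\beta,\rho}(a',b')=\bigl[D_{\beta,\rho}(a,b)-D_{\beta,\rho}(a',b)\bigr]+\bigl[D_{\beta,\rho}(a',b)-D_{\beta,\rho}(a',b')\bigr].
\]

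\textbf{First term ($b$ fixed).} The map $a\mapsto D_{\beta,\rho}(a,b)$ is $1$-Lipschitz. This is the one place where the projection structure is used. For fixed $b$, the vector $D_{\beta,\rho}(a,b)$ is the minimizer of
\[
\tfrac12\|a-d\|^2+\tfrac{\rho}{2}\mu^2+\mu\bigl(\beta\|b\|^2-\langle d,b\rangle\bigr)
\]
at the dual optimum $\mu=\lambda$. More directly, set $P=bb^\top/(\|b\|^2+\rho)$ and $c=\beta\|b\|^2/(\|b\|^2+\rho)$. Then $D(a,b)=a+[c-\langle a,b\rangle/(\|b\|^2+\rho)]_+\,b$. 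On either side of the kink this is affine in $a$, with linear part $I$ or $I-P$, and both have operator norm at most $1$. The map is continuous and piecewise affine along segments, so it is $1$-Lipschitz, which gives the bound $\|a-a'\|$.

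\textbf{Second term ($a'$ fixed).} Write
\[
\lambda(a',b)\,b-\lambda(a',b')\,b'=\lambda(a',b)(b-b')+\bigl(\lambda(a',b)-\lambda(a',b')\bigr)b'.
\]
By \eqref{lem:helpful:1c}, the first piece has norm at most $(\beta+\|a'\|/(2\sqrt\rho))\|b-b'\|$. For the second piece, I would write $\phi(b)=\lambda(a',b)\,b$ and bound its derivative wherever it exists. The derivative is
\[
\lambda I+b\,\nabla_b\lambda^\top,\qquad
\nabla_b\lambda=\frac{2\beta b-a'}{\|b\|^2+\rho}-\frac{2\lambda b}{\|b\|^2+\rho}.
\]
Using $\|b\|^2/(\|b\|^2+\rho)\le1$, $\|b\|/(\|b\|^2+\rho)\le 1/(2\sqrt\rho)$ and \eqref{lem:helpful:1c}, this gives an operator-norm bound of the form $c_1\beta+c_2\|a'\|/\sqrt\rho$. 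The nonnegative part is handled by noting that $\phi$ is continuous and piecewise smooth, and vanishes on one side. I would then use the mean value inequality along the segment $[b',b]$.

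The constants need care to reach exactly $3\beta/2$ and $2/\sqrt\rho$. The $\beta$ contributions are $\lambda\le\beta$ (from the $\beta$-part) plus $2\beta\|b\|^2/(\|b\|^2+\rho)$ minus $2\lambda\|b\|^2/(\|b\|^2+\rho)$. Here I expect a cancellation: $\lambda$ and $\beta\|b\|^2/(\|b\|^2+\rho)$ combine, and the identity $\lambda(\|b\|^2+\rho)=\beta\|b\|^2-\langle a',b\rangle$ on the active region reduces the $\beta$-part. The $\|a'\|$ contributions are $\|a'\|/(2\sqrt\rho)$ from $\lambda$, $\|a'\|\|b\|/(\|b\|^2+\rho)$, and $2\|a'\|\|b\|^2\cdot\|b\|/(\|b\|^2+\rho)^2\le\|a'\|/\sqrt\rho\cdot c$. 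Matching the stated $2\|a\|/\sqrt\rho$ is the main obstacle. One further point: the statement has $\|a\|$, not $\|a'\|$. If the constant comes out in terms of $\|a'\|$, I would instead split through $(a,b')$, which changes $b$ while $a$ is held fixed, so the bound is in terms of $\|a\|$.

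\textbf{The expectation bound.} Apply the deterministic bound with $a=\nabla f(x_k)$, $b=\nabla g(x_k)$, $a'=\nablafk$, $b'=\nablagk$. Then $d_k=D(a,b)$, $\hat d_k=D(a',b')$, and $\|a\|\le G_f$. Using $(u+v)^2\le 2u^2+2v^2$ and taking conditional expectations,
\[
\ExpD{k}{\|d_k-\hat d_k\|^2}\le 2\,\ExpD{k}{\|\nablafk-\nabla f(x_k)\|^2}+2\Bigl(\tfrac{3\beta_k}{2}+\tfrac{2G_f}{\sqrt{\rho_k}}\Bigr)^2\ExpD{k}{\|\nablagk-\nabla g(x_k)\|^2}.
\]
Finally, the mini-batch variance bounds $\sigma_f^2/B_{f,k}$ and $\sigma_g^2/B_{g,k}$, which follow from the independent samples and Assumption \ref{ass:main}, give exactly $\tau_k^2$.
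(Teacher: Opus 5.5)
Your route is the paper's: $1$-Lipschitz dependence on $a$ because the linear parts $I$ and $I-\frac{bb^\top}{\|b\|^2+\rho}$ are nonexpansive; a derivative bound for $\phi(b)=\lambda(a,b)\,b$ combined with piecewise smoothness along segments; the intermediate point $(a,b')$ so that the constant involves $\|a\|$; and then $(u+v)^2\le 2u^2+2v^2$ with the mini-batch variances. As written, however, the $b$-step does not establish the stated constant $\frac{3\beta}{2}+\frac{2\|a\|}{\sqrt\rho}$. This matters because that constant is exactly what defines $\tau_k$.

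First, drop the preliminary split $\lambda(a,b)(b-b')+(\lambda(a,b)-\lambda(a,b'))\,b'$. The mean value inequality for $\phi$ already bounds the whole difference $\phi(b)-\phi(b')$, since its derivative $\lambda I+b\,\nabla_b\lambda^\top$ contains the $\lambda I$ term. Adding the bound on the first piece on top would count $\lambda$ twice. Bounding only the second piece by $\sup_{[b',b]}\|\nabla_b\lambda\|\cdot\|b'\|$ also fails. The factor there is $\|b'\|$, not the norm at the point where $\nabla_b\lambda$ is evaluated, so $\|b\|/(\|b\|^2+\rho)\le 1/(2\sqrt\rho)$ is unavailable. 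For instance, near $b=0$ in the active region $\|\nabla_b\lambda\|\approx\|a\|/\rho$, while $\|b'\|$ is arbitrary.

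Second, the inequalities you list are not sharp enough. Using $\|b\|^2/(\|b\|^2+\rho)\le 1$ and then \eqref{lem:helpful:1c} on $|\beta-\lambda|$ gives $2|\beta-\lambda|\le 2\beta+\frac{\|a\|}{\sqrt\rho}$, so the total comes out as $3\beta+\frac{2\|a\|}{\sqrt\rho}$. The cancellation you anticipate is the following substitution, valid on the active region:
\[
\beta-\lambda=\frac{\beta\rho+\langle a,b\rangle}{\|b\|^2+\rho}.
\]
It yields
\[
\|b\|\,\|\nabla_b\lambda\|\le\frac{2\beta\rho\|b\|^2}{(\|b\|^2+\rho)^2}+\frac{2\|a\|\|b\|^3}{(\|b\|^2+\rho)^2}+\frac{\|a\|\|b\|}{\|b\|^2+\rho}\le\frac{\beta}{2}+\frac{\|a\|}{\sqrt\rho}+\frac{\|a\|}{2\sqrt\rho},
\]
where the last step uses $(t^2+\rho)^2\ge 4\rho t^2$ and $\frac{t^3}{(t^2+\rho)^2}\le\frac{t^2}{t^2+\rho}\cdot\frac{t}{t^2+\rho}\le\frac{1}{2\sqrt\rho}$. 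Adding $\lambda\le\beta+\frac{\|a\|}{2\sqrt\rho}$ gives exactly $\frac{3\beta}{2}+\frac{2\|a\|}{\sqrt\rho}$. This is precisely the paper's computation; it writes $D\lambda_a(b)$ with the $\beta$-terms already merged into $\frac{2\rho\beta\langle b,u\rangle}{(\|b\|^2+\rho)^2}$. With these two corrections your argument coincides with the paper's proof.

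A minor point: in your regularized Lagrangian the term must be $-\frac{\rho}{2}\mu^2$ for $\lambda$ to be the dual maximizer. You do not rely on that remark, so it does not affect the argument.
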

\begin{proof}
    The following hold:
    \begin{itemize}
        \item For a fixed $b\in \H$, we define $\phi_b(a)=a+\frac{[\beta\|b\|^2-\langle a, b\rangle]+}{\|b\|^2+\rho}b$. On the half-space $\beta\|b\|^2-\langle a, b\rangle\le 0$, $\phi_b\equiv I$, and therefore $D\phi_b(a)\equiv I$ on the interior of the half-space, where it then holds that $\|D\phi_b(a)\|_{\text{op}}=1$. On the interior of the other half-space, namely for $\beta\|b\|^2-\langle a, b\rangle> 0$, it holds that $D\phi_b(a)\equiv I-\frac{b\otimes b}{\|b\|^2+\rho}$, whose eigenvalues are $1$ on $b^\perp$ and $\frac{\rho}{\|b\|^2+\rho}\in [0, 1)$ for $\rho\ge 0$. Specifically, $\|D\phi_b(a)\|_{\text{op}}\le 1$. 

        Now, fix $a$ and $a'$. If both lie within the same half-space, then, it holds that
        \begin{align*}
            \|\phi_b(a)-\phi_b(a')\|
            &\le \left|\int_0^1 D\phi_b(a+t(a'-a))[a'-a]dt\right|\\
            &\le\int_0^1 \|D\phi_b(a+t(a'-a))\|_{\text{op}}\|a'-a\|dt \le \|a-a'\|.
        \end{align*}
        If they do not lie within the same half-space, define $\tilde a$ as the unique point on the segment $[a, a']$ satisfying $\beta\|b\|^2-\langle a, b\rangle= 0$. Then it holds that
        \begin{align*}
            \|\phi_b(a)-\phi_b(a')\|
            &\le \|\phi_b(a)-\phi_b(\tilde a)\|+\|\phi_b(\tilde a)-\phi_b(a')\|
            &\le \|a-\tilde a\|+\|\tilde a-a'\|=\|a-a'\|,
        \end{align*}
        where the latter holds as $a, \tilde a, a'$ are colinear. Specifically, it holds that, for fixed $b\in \H$ and arbitrary $a, a'\in \H$, 
        \[
            \|D_{\beta, \rho}(a, b)-D_{\beta, \rho}(a', b)\|\le \|a-a'\|.
        \]
        \item Now fix $a\in \H$, and define $\lambda_a(b)=\frac{\beta\|b\|^2-\langle a, b\rangle}{\|b\|^2+\rho}$ and $\psi_a(b)=a+[\lambda_a(b)]_+b$. In the inactive region, $\psi_a(b)\equiv a$, so that $D\psi_a(b)\equiv 0$. In the active region, it holds that 
        \[
            D\psi_a(b)[u]=D\lambda_a(b)[u]b+\lambda_a(b)u,
        \]
        where
        \[
            D\lambda_a(b)[u]=\frac{2\rho\beta\langle b, u\rangle}{(\|b\|^2+\rho)^2}-\frac{\langle a, u\rangle}{\|b\|^2+\rho}+\frac{2\langle b, u\rangle \langle a, b\rangle}{(\|b\|^2+\rho)^2}.
        \]
        As such, 
        \[
            \|D\psi_a(b)[u]\|\le \left(\frac{2\rho\beta\|b\|^2}{(\|b\|^2+\rho)^2}+\frac{\|a\| \|b\|}{\|b\|^2+\rho}+\frac{2 \|b\|^3 \|a\|}{(\|b\|^2+\rho)^2}+\lambda_a(b)\right)\|u\|.
        \]
        Since it holds that 
        \[
            \frac{\|b\|^2}{(\|b\|^2+\rho)^2}\le \frac{1}{4\rho}, \quad \frac{\|b\|}{\|b\|^2+\rho}\le \frac{1}{2\sqrt{\rho}}, \quad \frac{\|b\|^3}{(\|b\|^2+\rho)^2}\le \frac{1}{2\sqrt{\rho}},
        \]
        and, by Inequality \eqref{lem:helpful:1c} of Lemma \ref{lem:helpful}, 
        \[
            \lambda_a(b)\le \beta+\frac{\|a\|}{2\sqrt{\rho}},
        \]
        we deduce that 
        \[
            \|D\psi_a(b)\|_{\text{op}}\le \frac{\beta}{2}+\frac{\|a\|}{2\sqrt \rho}+\frac{\|a\|}{\sqrt \rho}+\beta+\frac{\|a\|}{2\sqrt \rho}=\frac{3\beta}{2}+\frac{2\|a\|}{\sqrt \rho}.
        \]
        We note that the inactive and active regions are no longer half-spaces. However, for a given $a\in \H$, any segment $[b, b']$ can be split into finitely many segments that each lie entirely within one of the regions, since these regions are parametrized by a polynomial. As such, by the same reasoning as the previous case, it holds that, for fixed $a\in \H$ and arbitrary $b, b'\in \H$, 
        \[
            \|D_{\beta, \rho}(a, b)-D_{\beta, \rho}(a, b')\|\le \left(\frac{3\beta}{2}+\frac{2\|a\|}{\sqrt \rho}\right)\|b-b'\|.
        \]
    \end{itemize}
    Since
    \[
        \|D_{\beta, \rho}(a, b)-D_{\beta,\rho}(a', b')\|\le \|D_{\beta, \rho}(a, b)-D_{\beta,\rho}(a, b')\|+\|D_{\beta, \rho}(a', b')-D_{\beta,\rho}(a, b')\|,
    \]
    the above two bounds yield the first result.
    
    The second result follows by using $d_k=D_{\beta_k, \rho_k}( \nabla f(x_k),  \nabla g(x_k))$ and $\hat d_k=D_{\beta_k, \rho_k}(\nablafk, \nablagk)$, and recalling the bounds 
    \[
    	\ExpD{k}{\|\nabla f(x_k)-\nablafk\|^2}\le \nu_{f,k}^2, \quad \ExpD{k}{\|\nabla g(x_k)-\nablagk\|^2}\le \nu_{g,k}^2, \quad \|\nabla f(x_k)\|\le G_f.
    \]
    As such, 
    {\small\begin{align*}
        \ExpD{k}{\|d_k-\hat d_k\|^2}&\le 2\ExpD{k}{\|\nabla f(x_k)-\nablafk\|^2}+2\left(\frac{3\beta_k}{2}+\frac{2\|\nabla f(x_k)\|}{\sqrt \rho_k}\right)^2\ExpD{k}{\|\nabla g(x_k)-\nablagk\|^2} \\
        &\le 2\nu_{f, k}^2+2\left(\frac{3\beta_k}{2}+\frac{2G_f}{\sqrt {\rho_k}}\right)^2\nu_{g, k} = \tau_k^2,
    \end{align*}
    }as wanted.
\end{proof}

\subsection{Descent Analysis and Energy Bound}

We now derive descent lemmas for both the upper- and lower-level objectives. Combining these bounds yields a weighted stationarity bound, from which the convergence rates in Section \ref{app:proof_coro_rates} follow. 

\begin{lemma}[Upper-Level Descent]\label{lem:upper}
    For every $k\ge 0$, almost surely,
    \[
        (1-L_f\eta_k)\|d_k\|^2\le \frac{\delta_{f, k}}{\eta_k}+\beta_k^2\|\nabla g(x_k)\|^2+\beta_k G_f\|\nabla g(x_k)\|+C_{f,k}.
    \]
\end{lemma}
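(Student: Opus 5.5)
The plan is to run the standard $L_f$-smooth descent inequality along the stochastic step $x_{k+1}=x_k-\eta_k\hat d_k$, and then replace $\hat d_k$ by the deterministic direction $d_k$. The error of this replacement is controlled by Lemma~\ref{lem:stability}; the exact-direction term is handled by the multiplier identities of Lemma~\ref{lem:helpful}.

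\textbf{Step 1 (descent and splitting).} By $L_f$-smoothness,
\[
f(x_{k+1})\le f(x_k)-\eta_k\langle\nabla f(x_k),\hat d_k\rangle+\tfrac{L_f\eta_k^2}{2}\|\hat d_k\|^2 .
\]
I will write $\hat d_k=d_k+(\hat d_k-d_k)$. For the quadratic term I use $\|\hat d_k\|^2\le 2\|d_k\|^2+2\|\hat d_k-d_k\|^2$, so that it contributes at most $L_f\eta_k^2\|d_k\|^2+L_f\eta_k^2\|\hat d_k-d_k\|^2$. For the cross term I use Cauchy--Schwarz and $\|\nabla f(x_k)\|\le G_f$:
\[
-\langle\nabla f(x_k),\hat d_k-d_k\rangle\le G_f\|\hat d_k-d_k\|.
\]

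\textbf{Step 2 (conditional expectation).} I take $\ExpD{k}{\cdot}$, noting that $d_k$ and $x_k$ are $\mathcal F_k$-measurable. Lemma~\ref{lem:stability} gives $\ExpD{k}{\|\hat d_k-d_k\|^2}\le\tau_k^2$, and Jensen's inequality gives $\ExpD{k}{\|\hat d_k-d_k\|}\le\tau_k$. This yields
\[
\delta_{f,k}\ge \eta_k\langle\nabla f(x_k),d_k\rangle-L_f\eta_k^2\|d_k\|^2-\eta_k G_f\tau_k-L_f\eta_k^2\tau_k^2 .
\]
After division by $\eta_k$, the last two terms are exactly $-C_{f,k}$.

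\textbf{Step 3 (exact-direction inner product).} This is the only step that needs structure rather than routine estimation. I apply Lemma~\ref{lem:helpful} with $a=\nabla f(x_k)$ and $b=\nabla g(x_k)$, so that $d=d_k$ and $\lambda=\lambda_k$. Since $\nabla f(x_k)=d_k-\lambda_k\nabla g(x_k)$,
\[
\langle\nabla f(x_k),d_k\rangle=\|d_k\|^2-\lambda_k\langle\nabla g(x_k),d_k\rangle .
\]
Identity~\eqref{lem:helpful:1b} turns the subtracted term into $\beta_k\lambda_k\|\nabla g(x_k)\|^2-\rho_k\lambda_k^2$. Dropping the favourable term $+\rho_k\lambda_k^2\ge0$, I then bound $\lambda_k\|\nabla g(x_k)\|^2$ via~\eqref{lem:helpful:1d} by $\beta_k\|\nabla g(x_k)\|^2+G_f\|\nabla g(x_k)\|$. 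Hence
\[
\langle\nabla f(x_k),d_k\rangle\ge\|d_k\|^2-\beta_k^2\|\nabla g(x_k)\|^2-\beta_kG_f\|\nabla g(x_k)\| .
\]
Substituting this into Step 2, dividing by $\eta_k$ and rearranging gives the claimed inequality with coefficient $(1-L_f\eta_k)$ on $\|d_k\|^2$.

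The main point of care is Step 3: one must use the exact identity~\eqref{lem:helpful:1b} rather than the inequality~\eqref{lem:helpful:1a}, so that the $\rho_k$-perturbation appears with a good sign and can be discarded. Otherwise Step 3 is bookkeeping. The factor $2$ in Step 1 is what produces $L_f\eta_k$, rather than $L_f\eta_k/2$, in front of $\|d_k\|^2$, consistent with the statement.
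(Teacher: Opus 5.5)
Your proposal is correct and follows the paper's proof essentially step for step. The shared steps are: the smoothness inequality along $\hat d_k$ with the split $\hat d_k=d_k+(\hat d_k-d_k)$ and $\|\hat d_k\|^2\le 2\|d_k\|^2+2\|\hat d_k-d_k\|^2$; Lemma~\ref{lem:stability}, where your explicit Jensen step for $\ExpD{k}{\|\hat d_k-d_k\|}\le\tau_k$ is left implicit in the paper; and then identity \eqref{lem:helpful:1b} followed by \eqref{lem:helpful:1d} after discarding $\rho_k\lambda_k^2\ge 0$. Your remark that \eqref{lem:helpful:1b} rather than \eqref{lem:helpful:1a} is needed is also right, because a lower bound on $\langle\nabla f(x_k),d_k\rangle$ requires an upper bound on $\lambda_k\langle\nabla g(x_k),d_k\rangle$.
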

\begin{proof}
    By $L_f$-smoothness of $f$ and Lemma \ref{lem:stability},
    {\small\begin{align*}
        \ExpD{k}{f(x_{k+1})}
        &=\ExpD{k}{f(x_k-\eta_k \hat d_k)} \\
        &\le \ExpD{k}{f(x_k) -\eta_k\langle \nabla f(x_k), \hat d_k\rangle+\frac{L_f\eta_k^2}{2}\|\hat d_k\|^2} \\
        &= \ExpD{k}{f(x_k) -\eta_k\langle \nabla f(x_k), \hat d_k-d_k\rangle-\eta_k\langle \nabla f(x_k), d_k\rangle+\frac{L_f\eta_k^2}{2}\|\hat d_k-d_k+d_k\|^2} \\
        &\le \ExpD{k}{f(x_k) +\eta_k \|\nabla f(x_k)\|\cdot  \|\hat d_k-d_k\|-\eta_k\langle \nabla f(x_k), d_k\rangle+L_f\eta_k^2\|\hat d_k-d_k\|^2+L_f\eta_k^2\|d_k\|^2} \\
        &\le f(x_k) +\eta_k G_f\tau_k-\eta_k\langle \nabla f(x_k), d_k\rangle+L_f\eta_k^2\tau_k^2+L_f\eta_k^2\|d_k\|^2.
    \end{align*}
    }The result follows by Lemma \ref{lem:helpful}, by noting that
    \begin{align*}
        \langle \nabla f(x_k), d_k\rangle
        &=\langle d_k-\lambda_k \nabla g(x_k), d_k\rangle \\
        &=\|d_k\|^2-\lambda_k \langle \nabla g(x_k), d_k\rangle \\
        &\stackrel{\eqref{lem:helpful:1b}}{=} \|d_k\|^2-\beta_k\lambda_k \|\nabla g(x_k)\|^2+\rho_k\lambda_k^2 \\
        &\stackrel{\eqref{lem:helpful:1d}}{\ge} \|d_k\|^2-\beta_k^2 \|\nabla g(x_k)\|^2-\beta_k\|\nabla f(x_k)\|\cdot \|\nabla g(x_k)\|+\rho_k\lambda_k^2,
    \end{align*}
    and by bounding away $\rho_k\lambda_k^2\ge 0$.
\end{proof}

\begin{lemma}[Lower-Level Descent]\label{lem:lower}
    For every $k\ge 0$, almost surely,
    \[
        \frac{\beta_k}{2}\|\nabla g(x_k)\|^2\le \frac{\delta_{g, k}}{\eta_k}+L_g\eta_k\|d_k\|^2+C_{g,k}.
    \]
\end{lemma}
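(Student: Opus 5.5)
We mirror the proof of Lemma \ref{lem:upper}, now using the $L_g$-smoothness of $g$. First we expand $g(x_{k+1})=g(x_k-\eta_k\hat d_k)$ and take conditional expectations:
\[
    \ExpD{k}{g(x_{k+1})}\le g(x_k)-\eta_k\ExpD{k}{\langle \nabla g(x_k),\hat d_k\rangle}+\frac{L_g\eta_k^2}{2}\ExpD{k}{\|\hat d_k\|^2}.
\]
Next we split $\hat d_k=d_k+(\hat d_k-d_k)$. The quadratic term is bounded by $L_g\eta_k^2\|d_k\|^2+L_g\eta_k^2\tau_k^2$, using $\|u+v\|^2\le 2\|u\|^2+2\|v\|^2$ and Lemma \ref{lem:stability}. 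This produces the $L_g\eta_k\|d_k\|^2$ term and the $L_g\eta_k\tau_k^2$ piece of $C_{g,k}$ after division by $\eta_k$.

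For the deterministic inner product we use \eqref{lem:helpful:1a} with $a=\nabla f(x_k)$, $b=\nabla g(x_k)$, which gives $\langle \nabla g(x_k),d_k\rangle\ge \beta_k\|\nabla g(x_k)\|^2-\rho_k\lambda_k$. We then bound $\rho_k\lambda_k$ via \eqref{lem:helpful:1c}:
\[
    \rho_k\lambda_k\le \beta_k\rho_k+\frac{G_f}{2}\sqrt{\rho_k}.
\]
These are exactly the first two terms of $C_{g,k}$.

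The cross term $-\eta_k\langle \nabla g(x_k),\hat d_k-d_k\rangle$ is the main obstacle. Unlike the upper-level case, $\|\nabla g(x_k)\|$ is not assumed bounded, so we cannot write $G_g\tau_k$. Moreover $\hat d_k$ is a biased estimator of $d_k$, so the conditional mean of this term does not vanish. We handle it with Young's inequality weighted by $\beta_k$:
\[
    |\langle \nabla g(x_k),\hat d_k-d_k\rangle|\le \frac{\beta_k}{2}\|\nabla g(x_k)\|^2+\frac{1}{2\beta_k}\|\hat d_k-d_k\|^2.
\]
Taking $\ExpD{k}{\cdot}$ and applying Lemma \ref{lem:stability} bounds this by $\frac{\beta_k}{2}\|\nabla g(x_k)\|^2+\frac{\tau_k^2}{2\beta_k}$. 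The first piece absorbs half of the $\beta_k\|\nabla g(x_k)\|^2$ descent, which explains the factor $\frac{\beta_k}{2}$ in the statement. The second piece is the $\frac{\tau_k^2}{2\beta_k}$ term of $C_{g,k}$.

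Collecting the three estimates, we obtain
\[
    \ExpD{k}{g(x_{k+1})}\le g(x_k)-\eta_k\frac{\beta_k}{2}\|\nabla g(x_k)\|^2+\eta_k C_{g,k}+L_g\eta_k^2\|d_k\|^2.
\]
Rearranging and dividing by $\eta_k$ yields the claimed inequality with $\delta_{g,k}$, almost surely.
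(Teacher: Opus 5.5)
Your proof is correct and follows essentially the same route as the paper. You use the smoothness expansion, the split $\hat d_k=d_k+(\hat d_k-d_k)$, and \eqref{lem:helpful:1a}--\eqref{lem:helpful:1c} for the deterministic inner product; the $\beta_k$-weighted Young inequality on the cross term then yields exactly $C_{g,k}$. The only cosmetic difference is that you apply Young pointwise before conditioning, whereas the paper first bounds the cross term by $\|\nabla g(x_k)\|\tau_k$ (Cauchy--Schwarz plus Jensen) and then applies Young, giving identical constants.
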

\begin{proof}
    By $L_g$-smoothness of $g$, mirroring the steps in the proof of Lemma \ref{lem:upper}, 
    \begin{align*}
        \ExpD{k}{g(x_{k+1})}
        &\le g(x_k) +\eta_k \|\nabla g(x_k)\|\tau_k-\eta_k\langle \nabla g(x_k), d_k\rangle+\eta_k^2L_g\tau_k^2+\eta_k^2L_g\|d_k\|^2.
    \end{align*}
    By Young's Inequality, it holds that 
    \[
        \|\nabla g(x_k)\|\tau_k\le \frac{\beta_k\|\nabla g(x_k)\|^2}{2}+\frac{\tau_k^2}{2\beta_k},
    \]
    and by Lemma \ref{lem:helpful}, we get that 
    \[
        \langle \nabla g(x_k), d_k\rangle \stackrel{\eqref{lem:helpful:1a}}\ge \beta_k \|\nabla g(x_k)\|^2-\rho_k\lambda_k\stackrel{\eqref{lem:helpful:1c}}\ge \beta_k \|\nabla g(x_k)\|^2-\beta_k \rho_k - \rho_k \frac{G_f}{2\sqrt {\rho_k}},
    \]
    which concludes.
\end{proof}
\begin{lemma}[Combined Descent]\label{lem:combined}
    Assume $\eta_k\le \frac{1}{2(L_f+2L_g)}$ and $0<\beta_k\le 1$. For every $k\ge 0$, almost surely,
    \[
        \|d_k\|^2\le \frac{4}{\eta_k}\delta_{f, k}+\frac{8\beta_k\eta_kL_g+1}{\eta_k^2L_g}\delta_{g, k}+R_{k},
    \]
    and
    \[
        \frac{\beta_k}{2}\|\nabla g(x_k)\|^2\le 4L_g\delta_{f, k}+\frac{8L_g\beta_k\eta_k+2}{\eta_k}\delta_{g, k}+\eta_kL_gR_k+C_{g,k}.
    \]
\end{lemma}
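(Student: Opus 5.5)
The plan is to combine Lemma~\ref{lem:upper} and Lemma~\ref{lem:lower}. We use the lower-level descent inequality to eliminate the $\|\nabla g(x_k)\|$ terms on the right-hand side of the upper-level one. We then absorb the resulting $\|d_k\|^2$ contributions into the left-hand side using the step-size condition. All manipulations multiply almost-sure inequalities by positive scalars, so the possibly negative quantities $\delta_{f,k},\delta_{g,k}$ cause no sign issue.

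\textbf{Step 1 (rearranging Lemma~\ref{lem:lower}).} Rewrite Lemma~\ref{lem:lower} as
\[
\beta_k\|\nabla g(x_k)\|^2\le 2\Big(\tfrac{\delta_{g,k}}{\eta_k}+L_g\eta_k\|d_k\|^2+C_{g,k}\Big).
\]

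\textbf{Step 2 (bounding the $\nabla g$ terms of Lemma~\ref{lem:upper}).} For the quadratic term, use Step 1 to get
\[
\beta_k^2\|\nabla g(x_k)\|^2\le 2\beta_k\big(\tfrac{\delta_{g,k}}{\eta_k}+L_g\eta_k\|d_k\|^2+C_{g,k}\big).
\]
For the linear term, apply Young's inequality with a weight tuned to the target constants:
\[
\beta_kG_f\|\nabla g(x_k)\|\le 2\beta_kL_g\eta_kG_f^2+\frac{\beta_k\|\nabla g(x_k)\|^2}{8L_g\eta_k}.
\]
Applying Step 1 again to the last term gives
\[
\frac{\beta_k\|\nabla g(x_k)\|^2}{8L_g\eta_k}\le \frac{\delta_{g,k}}{4L_g\eta_k^2}+\frac{\|d_k\|^2}{4}+\frac{C_{g,k}}{4L_g\eta_k}.
\]

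\textbf{Step 3 (absorbing $\|d_k\|^2$).} Substituting into Lemma~\ref{lem:upper} yields
\[
\Big(\tfrac34-L_f\eta_k-2\beta_kL_g\eta_k\Big)\|d_k\|^2\le \frac{\delta_{f,k}}{\eta_k}+\Big(\frac{2\beta_k}{\eta_k}+\frac{1}{4L_g\eta_k^2}\Big)\delta_{g,k}+C_{f,k}+2\beta_kC_{g,k}+\frac{C_{g,k}}{4L_g\eta_k}+2\beta_kL_g\eta_kG_f^2.
\]
Since $\beta_k\le1$ and $\eta_k\le \frac{1}{2(L_f+2L_g)}$, we have $L_f\eta_k+2\beta_kL_g\eta_k\le \tfrac12$, so the left coefficient is at least $\tfrac14$. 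Multiplying by $4$ gives exactly the first claim: the $\delta_{g,k}$ coefficient becomes $\frac{8\beta_k\eta_kL_g+1}{\eta_k^2L_g}$, and the remaining terms assemble into $R_k$.

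\textbf{Step 4 (second claim).} Insert the first claim into the $L_g\eta_k\|d_k\|^2$ term of Lemma~\ref{lem:lower}; its coefficient is positive, so the substitution is valid. This produces $4L_g\delta_{f,k}$ and $\big(\frac{8L_g\beta_k\eta_k+1}{\eta_k}+\frac1{\eta_k}\big)\delta_{g,k}$, plus $\eta_kL_gR_k+C_{g,k}$, which is the second claim.

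The only delicate point is the Young weight in Step 2. It must be chosen so that the induced $\|d_k\|^2$ coefficient is exactly $\tfrac14$ and the constants match $R_k$. Everything else is bookkeeping.
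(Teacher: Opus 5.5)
Your proof is correct and follows essentially the paper's argument. You combine Lemmas \ref{lem:upper} and \ref{lem:lower}, eliminate $\|\nabla g(x_k)\|$ through the lower-level bound, absorb $\|d_k\|^2$ using $\eta_k(L_f+2L_g)\le\frac{1}{2}$, and substitute back to get the second claim. The only difference is the order of operations: you apply a weighted Young inequality to $\beta_kG_f\|\nabla g(x_k)\|$ before absorbing, whereas the paper first bounds it by $\sqrt{2\beta_k}G_f\sqrt{H_k}$ and afterwards applies Lemma \ref{lem:quad}, which is itself Young's inequality. Both routes give exactly the constants in $R_k$ and in the $\delta_{g,k}$ coefficient.
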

\begin{proof}
    Define $H_k=\frac{\delta_{g, k}}{\eta_k}+L_g\eta_k\|d_k\|^2+C_{g,k}$, so that $H_k\ge \frac{\beta_k}{2}\|\nabla g(x_k)\|^2$ by Lemma \ref{lem:lower}. Then, since $\beta_k^2\|\nabla g(x_k)\|^2\le 2\beta_k H_k$ and $\beta_k G_f\|\nabla g(x_k)\|\le \sqrt{2\beta_k}G_f\sqrt{H_k}$, we deduce from Lemma \ref{lem:upper} that
    \begin{align*}
    	(1-L_f\eta_k)\|d_k\|^2
	& \le \frac{\delta_{f, k}}{\eta_k}+2\beta_k H_k+\sqrt{2\beta_k}G_f\sqrt{H_k}+C_{f,k} \\
	& = \frac{\delta_{f, k}}{\eta_k}+\frac{2\beta_k\delta_{g, k}}{\eta_k}+2\beta_k L_g\eta_k\|d_k\|^2+\sqrt{2\beta_k}G_f\sqrt{H_k}+C_{f,k}+2\beta_k C_{g,k}.
    \end{align*}
    As $1-L_f\eta_k-2\beta_k L_g\eta_k\ge \frac12$, we get
    \[
    	\|d_k\|^2 \le \frac{2\delta_{f, k}}{\eta_k}+\frac{4\beta_k\delta_{g, k}}{\eta_k}+2\sqrt{2\beta_k}G_f\sqrt{H_k}+2C_{f,k}+4\beta_k C_{g,k},
    \]
    which may be rewritten as
    \[
    	\frac{H_k}{L_g\eta_k} \le \frac{2\delta_{f, k}}{\eta_k}+\frac{4\beta_k\delta_{g, k}}{\eta_k}+\frac{\delta_{g, k}}{L_g\eta_k^2}+2\sqrt{2\beta_k}G_f\sqrt{L_g\eta_k}\sqrt{\frac{H_k}{L_g\eta_k}}+2C_{f,k}+4\beta_k C_{g,k}+\frac{C_{g,k}}{L_g\eta_k}.
    \]
    Using Lemma \ref{lem:quad} with $x=\frac{H_k}{L_g\eta_k}$, we get that 
    \[
    	\frac{H_k}{L_g\eta_k}\le 	\frac{4\delta_{f, k}}{\eta_k}+\frac{8\beta_k\delta_{g, k}}{\eta_k}+\frac{2\delta_{g, k}}{L_g\eta_k^2}+4C_{f,k}+8\beta_k C_{g,k}+\frac{2C_{g,k}}{L_g\eta_k}+8\beta_k G_f^2L_g\eta_k,
    \]
    which, equivalently, may be written as the first wanted inequality. Substituting this into Lemma \ref{lem:lower} yields the second claimed inequality.
\end{proof}

\begin{theorem}[Residual Bounds]\label{thm:main}
    Assume $\eta_k\le \frac{1}{2(L_f+2L_g)}$, $0<\beta_k\le 1$, $(\eta_k)$ and $(\beta_k)$ are nonincreasing, and $N_K$ is sampled according to \eqref{eq:def_P}. Then it holds that, for $K\ge 1$,
    \[
        \Exp{\|\nabla g(x_{N_K})\|^2}\le \frac{8\eta_0L_g}{\sum_{k=0}^{K-1}\eta_k\beta_k}\Delta_{f}+\frac{16\eta_0\beta_0L_g+4}{\sum_{k=0}^{K-1}\eta_k\beta_k}\Delta_g+\frac{2\sum_{k=0}^{K-1}\left(\eta_k^2L_gR_{k}+\eta_kC_{g,k}\right)}{{\sum_{k=0}^{K-1}\eta_k\beta_k}}.
    \]
    Moreover, if $(\beta_k/\eta_k)$ is nonincreasing,
    \[
         \Exp{\|d_{N_K}\|^2}\le \frac{4\beta_0}{\sum_{k=0}^{K-1}\eta_k\beta_k}\Delta_{f}+\frac{8\beta_0^2L_g+\frac{\beta_0}{\eta_0}}{L_g\sum_{k=0}^{K-1}\eta_k\beta_k}\Delta_g+\frac{\sum_{k=0}^{K-1}\eta_k\beta_kR_k}{\sum_{k=0}^{K-1}\eta_k\beta_k},
    \]
    and, if $(\beta_k/\eta_k)$ is nondecreasing,
    \[
        \Exp{\|d_{N_K}\|^2}\le \frac{\beta_{K-1}}{\eta_{K-1}}\left(\frac{4\eta_0}{\sum_{k=0}^{K-1}\eta_k\beta_k}\Delta_f+\frac{8L_g\beta_0\eta_0+1}{L_g{\sum_{k=0}^{K-1}\eta_k\beta_k}}\Delta_g+\frac{\sum_{k=0}^{K-1}\eta_k^2R_{k}}{\sum_{k=0}^{K-1}\eta_k\beta_k}\right).
    \]
\end{theorem}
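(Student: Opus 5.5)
We take total expectations in Lemma \ref{lem:combined}, reweight each inequality by a suitable factor, sum over $k=0,\dots,K-1$, and telescope the $\delta$ terms. By the tower property, $\Exp{\delta_{f,k}}=\Exp{f(x_k)}-\Exp{f(x_{k+1})}$, and similarly for $g$. Moreover, $\Exp{\|\nabla g(x_{N_K})\|^2}=\sum_k \eta_k\beta_k\Exp{\|\nabla g(x_k)\|^2}/\sum_k\eta_k\beta_k$, and likewise for $\|d_{N_K}\|^2$, since $N_K$ is drawn independently of the iterates according to \eqref{eq:def_P}.

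\textbf{Lower-level residual.} Multiply the second inequality of Lemma \ref{lem:combined} by $2\eta_k$. This gives
\[
\eta_k\beta_k\|\nabla g(x_k)\|^2\le 8L_g\eta_k\delta_{f,k}+(16L_g\beta_k\eta_k+4)\delta_{g,k}+2\eta_k^2L_gR_k+2\eta_kC_{g,k}.
\]
The coefficients $8L_g\eta_k$ and $16L_g\beta_k\eta_k+4$ are nonincreasing in $k$ because $(\eta_k)$ and $(\beta_k)$ are. We then need a weighted telescoping step: for a nonincreasing nonnegative weight $w_k$ and $F_k=\Exp{f(x_k)}-\inf f\ge0$, Abel summation gives
\[
\sum_k w_k(F_k-F_{k+1})=w_0F_0-w_{K-1}F_K+\sum_{k=1}^{K-1}(w_k-w_{k-1})F_k\le w_0F_0=w_0\Delta_f,
\]
and the same holds for $g$ with $\Delta_g$. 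Applying this with $w_k=8L_g\eta_k$ and $w_k=16L_g\beta_k\eta_k+4$, and then dividing by $\sum_k\eta_k\beta_k$, gives the first bound.

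\textbf{Upper-level residual.} The natural weights are $\eta_k\beta_k$. Multiplying the first inequality of Lemma \ref{lem:combined} by $\eta_k\beta_k$ produces the coefficients $4\beta_k$ on $\delta_{f,k}$ and $\frac{\beta_k}{\eta_k}\cdot\frac{8\beta_k\eta_kL_g+1}{L_g}=8\beta_k^2+\frac{\beta_k}{L_g\eta_k}$ on $\delta_{g,k}$. If $(\beta_k/\eta_k)$ is nonincreasing, both coefficients are nonincreasing, and the same telescoping step gives $4\beta_0\Delta_f+(8\beta_0^2L_g+\beta_0/\eta_0)\Delta_g/L_g$ plus $\sum\eta_k\beta_kR_k$, which is the second bound.

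If $(\beta_k/\eta_k)$ is nondecreasing, we instead multiply by $\eta_k^2$. The coefficients become $4\eta_k$ and $(8\beta_k\eta_kL_g+1)/L_g$, which are nonincreasing, so telescoping gives
\[
\sum_k\eta_k^2\Exp{\|d_k\|^2}\le 4\eta_0\Delta_f+\frac{8L_g\beta_0\eta_0+1}{L_g}\Delta_g+\sum_k\eta_k^2R_k.
\]
To return to the weights $\eta_k\beta_k$ we use $\eta_k\beta_k=\frac{\beta_k}{\eta_k}\eta_k^2\le\frac{\beta_{K-1}}{\eta_{K-1}}\eta_k^2$ for every $k\le K-1$. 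Dividing by $\sum_k\eta_k\beta_k$ then yields the third bound.

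\textbf{Main obstacle.} The only delicate point is matching the monotonicity of the multiplier of each $\delta$ term so that the weighted telescoping step applies. This is why the two cases on $\beta_k/\eta_k$ require different weights. Everything else is direct bookkeeping once the expectations have been taken.
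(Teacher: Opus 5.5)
Your proposal is correct and follows the paper's proof essentially step for step. You use the same weights ($2\eta_k$, $\eta_k\beta_k$, $\eta_k^2$) on the two inequalities of Lemma~\ref{lem:combined}, the same summation-by-parts bound $\sum_k w_k\Exp{\delta_{f,k}}\le w_0\Delta_f$ for nonincreasing nonnegative weights, and the same comparison $\eta_k\beta_k\le\frac{\beta_{K-1}}{\eta_{K-1}}\eta_k^2$ in the nondecreasing case. The only cosmetic differences are that you apply that comparison after summing rather than before, and your Abel identity uses only $w_0,\dots,w_{K-1}$ rather than also invoking $w_K$.
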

\begin{proof}
    Note that
    \[
        \sum_{k=0}^{K-1}\Exp{\delta_{f, k}}=f(x_0)-\Exp{f(x_K)}\le f(x_0)-\inf f=\Delta_f,
    \]
    and, similarly, $\sum_{k=0}^{K-1}\Exp{\delta_{g, k}}\le \Delta_g$. Moreover, for any nonincreasing sequence $(w_k)$, summation by parts allows us to write 
    {\small\begin{align*}
        \sum_{k=0}^{K-1}w_k\Exp{\delta_{f, k}}
        &=(w_0(f(x_0)-\inf f)-w_K\Exp{f(x_K)-\inf f})+\sum_{k=0}^{K-1}\Exp{f(x_{k+1})-\inf f}(w_{k+1}-w_k) \\
        &\le w_0(f(x_0)-\inf f),
    \end{align*}}which we shall use throughout this proof. A similar result of course holds for $g$.
    
    We separate the proof of the three inequalities:
    \begin{enumerate}
        \item For the first inequality, we multiply the second inequality of Lemma \ref{lem:combined} by $2\eta_k$, take total expectations, and sum for $k=0, \ldots, K-1$, to obtain 
        {\small\[
            \sum_{k=0}^{K-1}\eta_k\beta_k\Exp{\|\nabla g(x_{k})\|^2}\le 8L_g\sum_{k=0}^{K-1}\eta_k\Exp{\delta_{f,k}}+\sum_{k=0}^{K-1}(16\eta_k\beta_kL_g+4)\Exp{\delta_{g, k}}+2\sum_{k=0}^{K-1}\eta_k^2L_gR_{k}+\eta_kC_{g,k}.
        \]
        }As $(\eta_k)$ and $(\eta_k\beta_k)$ are nonincreasing, we can apply summation by parts to obtain the wanted inequality, after division by $\sum_{k=0}^{K-1}\eta_k\beta_k$.
        \item By multiplying the first inequality of Lemma \ref{lem:combined} by $\eta_k\beta_k$, we obtain
        \[
            \sum_{k=0}^{K-1}\eta_k\beta_k\Exp{\|d_{k}\|^2}\le 4\sum_{k=0}^{K-1}\beta_k\Exp{\delta_{f, k}}+\sum_{k=0}^{K-1}\frac{8\beta_k^2L_g+\frac{\beta_k}{\eta_k}}{L_g}\Exp{\delta_{g,k}}+\sum_{k=0}^{K-1}\eta_k\beta_kR_k.
        \]
        As $(\beta_k)$ and $(\beta_k/\eta_k)$ are nonincreasing, we can apply summation by parts to obtain the wanted inequality, after division by $\sum_{k=0}^{K-1}\eta_k\beta_k$.
        \item Finally, we multiply the first inequality of Lemma \ref{lem:combined} by $\eta_k^2$, we obtain for $k\le K-1$, since $(\beta_k/\eta_k)$ is nondecreasing,
        \begin{align*}
            \eta_k\beta_k\|d_k\|^2
            &\le \frac{\beta_k}{\eta_k}\eta_k^2\|d_k\|^2 \\
            &\le \frac{\beta_{K-1}}{\eta_{K-1}}\left(4\eta_k\delta_{f, k}+\frac{8L_g\beta_k\eta_k+1}{L_g}\delta_{g, k}+\eta_k^2R_{k}\right).
        \end{align*}
        As such, taking total expectation 
        and summing for $k=0,\ldots, K-1$ yields
        \[
            \sum_{k=0}^{K-1}\eta_k\beta_k\Exp{\|d_k\|^2}\le \frac{\beta_{K-1}}{\eta_{K-1}}\left(4\sum_{k=0}^{K-1}\eta_k\Exp{\delta_{f, k}}+\sum_{k=0}^{K-1}\frac{8L_g\beta_k\eta_k+1}{L_g}\Exp{\delta_{g, k}}+\sum_{k=0}^{K-1}\eta_k^2R_{k}\right).
        \]
        As $(\eta_k)$ and $(\eta_k\beta_k)$ are nonincreasing, we may apply summation by parts to obtain the wanted quantity, after division by $\sum_{k=0}^{K-1}\eta_k\beta_k$.
    \end{enumerate}
\end{proof}

\subsection{Proof of Corollary \ref{coro:rates}}\label{app:proof_coro_rates}

\subsubsection{Horizon-Dependent Parameters}

We consider the constant parameter setting of Theorem \ref{thm:main}, namely $\eta_k\equiv \eta, \beta_k\equiv \beta, \rho_k\equiv \rho, B_{f,k}\equiv B_f, B_{g,k}\equiv B_g$. This implies that $\tau_k\equiv \tau, C_{f,k}\equiv C_f, C_{g,k}\equiv C_g, R_k\equiv R$.

We assume that $\eta, \beta, \rho\le 1$ and that $B_f, B_g\ge 1$.
Therefore,
\[
    C_f=\mathcal O\left(\tau+\eta\tau^2\right)\quad \text{and}\quad C_g=\mathcal O\left(\beta\rho+\sqrt \rho+\frac{\tau^2}{\beta}+\eta\tau^2\right)=\mathcal O\left(\sqrt \rho+\frac{\tau^2}{\beta}\right),
\]
so that 
\begin{align*}
    R=\mathcal O\left(C_f+\beta C_g+\frac{C_g}{\eta}+\beta\eta\right) &=\mathcal O\left(\tau+\eta\tau^2+\beta \sqrt{\rho}+\tau^2+\frac{\sqrt{\rho}}{\eta}+\frac{\tau^2}{\beta\eta}+\beta\eta\right) \\
    &= \mathcal O\left(\tau + \frac{\tau^2}{\beta\eta}+\frac{\sqrt{\rho}}{\eta}+\beta\eta\right)
\end{align*}
Theorem \ref{thm:main} thus yields
\begin{align*}
    \Exp{\|d_{N_K}\|^2}
    &\le \mathcal O\left(\frac{1}{\eta K}+\frac{\beta}{\eta K}+\frac{1}{\eta^2K}+R\right) \\
    &\le \mathcal O\left(\frac{1}{\eta K}+\frac{\beta}{\eta K}+\frac{1}{\eta^2K}+\tau + \frac{\tau^2}{\beta\eta}+\frac{\sqrt{\rho}}{\eta}+\beta\eta \right) \\
    &\le \mathcal O\left(\frac{1}{\eta^2K}+\tau + \frac{\tau^2}{\beta\eta}+\frac{\sqrt{\rho}}{\eta}+\beta\eta \right).
\end{align*}
Balancing $\tau$ yields $\tau\propto \beta\eta$. Balancing $\rho$ yields $\rho\propto \beta^2\eta^4$. 

Finally, balancing $\beta$ yields $\beta\propto \eta^{-3}K^{-1}$. In order to achieve $\tau\propto \beta\eta\propto \eta^{-2}K^{-1}$, we must have 
\[
    \frac{1}{B_f}+\frac{\beta^2}{B_g}+\frac{1}{\rho B_g}\propto \eta^{-4}K^{-2}\implies B_f\propto \eta^4K^2, \quad B_g\propto \max(\eta^{-2}, \eta^6K^4).
\]
In that case, it holds that 
\[
    \Exp{\|d_{N_K}\|^2}\le\mathcal O(\eta^{-2}K^{-1}).
\]
Moreover, Theorem \ref{thm:main} also yields that
\begin{align*}
    \Exp{\|\nabla g(x_{N_K})\|^2}
    &\le \mathcal O\left(\frac{1}{\beta K}+\frac{1}{K}+\frac{1}{\eta\beta K}+\frac{\eta R}{\beta}+\frac{C_g}{\beta}\right) \\
    &\le \mathcal O\left(\frac{1}{\beta K}+\frac{1}{K}+\frac{1}{\eta\beta K}+\frac{\eta\tau}{\beta}+\frac{\tau^2}{\beta^2}+\frac{\sqrt{\rho}}{\beta}+\eta^2+\frac{\sqrt{\rho}}{\beta}+\frac{\tau^2}{\beta^2}\right) \\
    &= \mathcal O\left(\frac{1}{\eta\beta K}+\frac{\eta\tau}{\beta}+\frac{\tau^2}{\beta^2}+\frac{\sqrt{\rho}}{\beta}+\eta^2\right).
\end{align*}
Under the chosen parameters, the remaining terms are of order $\mathcal O(\eta^2)$ or lower, so that
\[
    \Exp{\|\nabla g(x_k)\|^2}\le \mathcal O\left(\eta^2\right).
\]
Setting $\eta\propto K^{-a}$ for $a\in (0, 1/3)$ yields the wanted result.

\subsubsection{Anytime Parameters}

The parameter choice follows from the parameters selected in the finite-horizon setting. Under these choices, it holds that 
\[
    \beta_k\propto (k+1)^{3a-1}, \quad \rho_k\propto (k+1)^{2a-2},
\]
\[
    B_{f, k}\propto (k+1)^{2-4a}, \quad B_{g, k}\propto (k+1)^{\max(2a, 4-6a)}=(k+1)^{4-6a}.
\]
Specifically, 
\begin{align*}
    \tau_k^2
    &=\mathcal O\left(\frac{1}{B_{f,k}}+\frac{\beta_k^2}{B_{g, k}}+\frac{1}{\rho_kB_{g, k}}\right) \\
    &=\mathcal O\left((k+1)^{4a-2}+(k+1)^{6a-2-(4-6a)}+(k+1)^{2-2a-(4-6a)}\right) \\
    &=\mathcal O((k+1)^{\max(12a-6,4a-2)}) \\
    &=\mathcal O((k+1)^{4a-2})
\end{align*}
Thus 
\[
    C_{f,k}\propto \tau_k+\eta_k\tau_k^2\propto (k+1)^{2a-1}+(k+1)^{3a-2}\propto (k+1)^{2a-1},
\]
and 
\begin{align*}
    C_{g, k}
    &\propto \beta_k\rho_k+\sqrt{\rho_k}+\frac{\tau_k^2}{\beta_k}+\eta_k\tau_k^2 \\
    &\propto (k+1)^{5a-3}+(k+1)^{a-1}+(k+1)^{a-1}+(k+1)^{3a-2} \\
    &\propto (k+1)^{a-1}.
\end{align*}
Therefore
\begin{align*}
    R_k
    &\propto C_{f, k}+\beta_kC_{g, k}+\frac{C_{g, k}}{\eta_k}+\beta_k\eta_k \\
    &\propto (k+1)^{2a-1}+(k+1)^{4a-2}+(k+1)^{2a-1}+(k+1)^{2a-1} \\
    &\propto (k+1)^{2a-1},
\end{align*}
and
\[
    R_k\eta_k\beta_k
    \propto (k+1)^{4a-2} \quad \text{and}\quad 
    R_{k}\eta_k^2\propto (k+1)^{-1}.
\]
As such, Theorem \ref{thm:main} gives us that, if $(\beta_k/\eta_k)\propto (k+1)^{4a-1}$ is nonincreasing, namely if $a\le 1/4$, then
\[
    \Exp{\|d_{N_K}\|^2}\le \mathcal O\left(\frac{1+\sum_{k=0}^{K-1}(k+1)^{4a-2}}{\sum_{k=0}^{K-1}(k+1)^{2a-1}}\right)=\begin{dcases}
        \mathcal O(K^{-2a})\quad &\text{if $0<a<1/4$}, \\
        \mathcal O(K^{-1/2}\log(K)) &\text{if $a=1/4$},
    \end{dcases},
\]
and if $(\beta_k/\eta_k)\propto (k+1)^{4a-1}$ is nondecreasing, namely if $a\ge 1/4$, then
\[
    \Exp{\|d_{N_K}\|^2}\le \mathcal O\left(K^{4a-1}\frac{1+\sum_{k=0}^{K-1}(k+1)^{-1}}{\sum_{k=0}^{K-1}(k+1)^{2a-1}}\right)=\mathcal O(K^{2a-1}\log(K)),
\]
which combined yields 
\[
    \Exp{\|d_{N_K}\|^2}\le \begin{dcases}
        \mathcal O(K^{-2a})\quad &\text{if $0<a<1/4$}, \\
        \mathcal O(K^{2a-1}\log(K))&\text{if $1/4\le a<1/3$}.
    \end{dcases}.
\]
We also have
\[
    \Exp{\|\nabla g(x_{N_K})\|^2}\le \mathcal O\left(\frac{1+\sum_{k=0}^{K-1}(k+1)^{-1}}{\sum_{k=0}^{K-1}(k+1)^{2a-1}}\right)=\mathcal O(K^{-2a}\log(K)).
\]
\section{Experimental Setup}\label{app:XP}

The goal of the numerical example in Section \ref{sec:num} is to provide a toy example showcasing a comparison between the methods SDBGD, SDBPG, PR-SDBPG and VR-PR-SDBPG. In this section, we provide a complete description of the experiment, allowing for complete reproducability. 

\textbf{Problem Setup.} As mentioning in Section \ref{sec:num}, we select
\[
    f(x)=\sqrt{1+\|x-c\|^2} \quad \text{and}\quad g(x_1,x_2)=2-\cos(x_1)-\exp(-x_2^2/2),
\]
where $c=(7.1, 1)$. This choices implies that 
\[
    \argmin g=\{(2\pi m, 0)\colon m\in \mathbb Z\},
\]
so that the solution to \eqref{eq:SSNBP} is $(2\pi,0)$. These functions satisfy our assumptions with $L_f=1, L_g=1, G_f=1$. 

\textbf{Stochastic Oracle.} We having access to stochastic gradients given by 
\[
    \nabla f_\xi(x)=\nabla f(x)+\xi\quad\text{and}\quad \nabla g_\zeta(x)=\nabla g(x)+\zeta,
\]
where $\xi\sim \mathcal N(0, \sigma_f^2I)$ and $\zeta\sim \mathcal N(0, \sigma_g^2I)$, so that assumption of uniformly bounded variance is satisfied. We select $\sigma_f=\sigma_g=0.5$.

\textbf{Algorithmic Setup.} All methods are implemented in an anytimes fashion. Our method, SDBGD, uses the anytime schedule described in Corollary \ref{coro:rates} with $a=1/4$. For the methods in Ahmadi et al. \cite{ahmadi_stochastic_2026}, whose theory only cover horizon-dependent schedules, we form anytime schedules by replacing the horizon $K$ by the current effective horizon $k+1$. Each method is started at the same initial point $x_0=(1.5, 1.5)$ and is given the same stochastic oracle budget $\mathcal B=10^8$.

\textbf{Parameter Selection.} The choice of parameters is given in Table \ref{tab:parameters}. The powers of $t_k=(k+1)$ follow the theoretical results, whereas the proportionality constants are chosen manually and not varied between runs. 

\vspace{-1em}
\begin{table}[H]
    \centering
    \small
    \caption{Parameter choices for the compared methods. Here, $t_k=k+1$ for convenience.}
    \begin{tabular}{|c|c|c|c|c|c|c|} \hline
        \shortstack{\textbf{Method}\strut\\\strut} & \shortstack{\textbf{Step-Size}\strut \\ ($\eta_k$)\strut} & \shortstack{\textbf{Barrier}\strut \\ ($\beta_k$)\strut} & \shortstack{\textbf{Regul.}\strut \\ ($\rho_k, \gamma_k$)\strut} & \shortstack{\textbf{Upp.-Bat.}\strut \\ ($B_{f, k}$)\strut} & \shortstack{\textbf{Low.-Bat.}\strut \\ ($B_{g, k}$)\strut} & \shortstack{\textbf{Other}\strut\\\strut}  \\ \hline
        SDBGD & $0.05t_k^{-1/4}$ & $0.5t_k^{-1/4}$ & $t_k^{-3/2}$ & $\lfloor t_k\rfloor$ & $\lfloor t_k^{5/2}\rfloor$ & N/A \\ \hline
        SDBPG & $0.05t_k^{-1/4}$ & $0.5t_k^{-1/4}$ & $t_k^{-1}$ & $\lfloor t_k\rfloor$ & $\lfloor t_k^{2}\rfloor$ & N/A \\ \hline
        PR-SDBPG & $0.05t_k^{-1/2}$ & $0.5t_k^{1/4}$ & $1$ & $\lfloor t_k\rfloor$ & $\lfloor t_k^{3/2}\rfloor$ & $\mu_k=1$ \\ \hline
        \shortstack{VR-PR-\strut\\SDBPG\strut} & $0.05t_k^{-1/2}$ & $0.5t_k^{1/4}$ & $1$ & $\lfloor t_k^{1/2}\rfloor$ & $\lfloor t_k\rfloor$ & \shortstack{$\mu_k=1$\strut\\$\alpha_k=0.2t_k^{-1/2}$\strut} \\ \hline
    \end{tabular}
    \label{tab:parameters}
\end{table}
\vspace{-1em}

\textbf{Oracle Accounting.} During iteration $k$, the methods SDBGD, SDBPG and PR-SDBPG use $B_{f,k}+B_{g,k}$ stochastic oracle calls. Method VR-PR-SDBPG makes use of $2(B_{f,k}+B_{g,k})$. An iteration is executed only if its complete oracle cost fits within the remaining oracle budget. 

\textbf{Evaluation Protocol.} We perform $10$ independent runs, and plot the empirical mean of the upper-level stationarity measure $\|d_k\|^2$ and if the lower-level stationarity measure $\|\nabla g(x_k)\|^2$. We note that full gradients are only used for evaluation purposes and are never called by the algorithm.

\textbf{Computer Resources.} The experiment is written in Python 3.13, and was executed on an Apple Silicon MacBook Pro, with a M5 chip and 16GB of RAM.

\textbf{Code Availability.} The code is available on the author's GitHub page: \url{https://github.com/DanielCortild/Stochastic-Nonconvex-Bilevel}.

\end{document}